\newtheorem{theorem}{Theorem}
\newtheorem{corollary}[theorem]{Corollary}
\newtheorem{definition}[theorem]{Definition}
\newtheorem{lemma}[theorem]{Lemma}
\newtheorem{proposition}[theorem]{Proposition}
\newenvironment{proof}[1][Proof]{\noindent\textbf{#1.} }{\ \rule{0.5em}{0.5em}}
\begin{document}

\author{Sini\v{s}a Slijep\v{c}evi\'{c}}
\affiliation{ 
Department of Mathematics, Bijeni\v{c}ka 30, Zagreb, Croatia
}
\title{Stability of synchronization in dissipatively driven
Frenkel-Kontorova models}
\date{\today}

\begin{abstract}
We rigorously show that dissipatively driven Frenkel-Kontorova models with either uniform or time-periodic driving asymptotically synchronize for a wide range of initial conditions. The main tool is a new Lyapunov function, as well as a 2D representation of the attractor. We then characterize dynamical phase transitions and outline new algorithms for determining them.
\end{abstract}

\pacs{05.45.Xt, 05.70.Fh, 63.10.+a}

\keywords{Frenkel-Kontorova model, synchronization, dynamical phase transition, attractor, space-time chaos}

\maketitle

\begin{quotation}
The model of a one-dimensional chain of particles connected by elastic springs in a spatially periodic potential, known as the Frenkel-Kontorova (FK) model, has been the paradigm tool for studying spatially modulated structures in solid state physics and 
beyond\cite{Braun:04, Floria96}. It has been numerically observed that if a FK chain is dissipatively driven with either a uniform (DC) or periodic  (AC) force, and with sufficiently strong damping, then the chain often asymptotically synchronizes. We prove that for a wide range of initial conditions and FK model parameters, by extending known techniques of monotone (or order-preserving) dynamics. Even though the model is deterministic, we study it in the statistical (or ergodic-theoretical) context, with particular focus on the notion of pinning/depinning (or dynamical Aubry) phase transitions. Our approach in particular enables applying tools of Hamiltonian dynamics to dissipative FK dynamics arbitrarily far from the equilibrium, thus extending reach of the pioneering ideas of Aubry and Mather\cite{Aubry88, Mather82}. 
\end{quotation}

\section{Introduction}

The over-damped dynamics of FK models studied here (also called {\it gradient} dynamics) has been accepted as a good approximation of physical situations with sufficiently strong damping\cite{Baesens05, Floria96}. The actual model and equations of motion are given in Section \ref{sec:one}. As reported in detail by Floria and Mazo\cite{Floria96}, the dynamical (Aubry) phase transition for DC driving is characterized by the occurrence of uniform asymptotic sliding of the chain. The situation in the AC case is more complex as further discussed in Section \ref{sec:one}, but asymptotic synchronization also often occurs. Middleton\cite{Middleton92}, Baesens and MacKay\cite{Baesens98} partially explained this as a consequence of order-preserving (or monotonicity) of the dynamics. This means that if two chain configurations $u=(u_i)_{i \in \mathbb{R}}$, $v=(v_i)_{i \in \mathbb{R}}$ are ordered, e.g. $u \leq v$ (where $\leq$ holds in each coordinate), then this ordering persists with dynamic evolution of configurations.

We show in Section \ref{sec:four} that synchronized solutions are globally attracting in the depinned phase of the dynamics and locally attracting in the pinned phase, for any initial configuration of {\it bounded width}. We thus extend already known rigorous results for spatially periodic configurations and DC setting. To do that, we propose a focus on asymptotic behavior different from the traditional. As known for example in the PDE setting\cite{Miranville08}, understanding the attractor of systems on infinite domains (that means all the asymptotics of all the initial conditions) is very difficult even for the simplest systems, as the attractor is typically infinite dimensional. Here we focus instead on asymptotics observable with non-zero (i.e. strictly positive) space-time probability, or more precisely observable for positive density of spatial translates and time evolutions. We call the set of such configurations the {\it space-time attractor}, and define it in Section \ref{sec:two}.

We then observe in Section \ref{sec:three} that the space-time attractor of our model is two-dimensional. This enables us to describe it in some detail. For example, we show that in the depinned phase, the attractor consists entirely of synchronized solutions.

Our analysis naturally leads to the ergodic-theoretical setting, and to the study of invariant probability measures (invariant with respect to both the time evolution and spatial translations). This results with characterizations of dynamical Aubry phase transitions. Physically, the pinned phase has been understood as the phase where parts of the physical space are asymptotically "off-bounds", while in the depinned phase the chain can slide over the entire space. This is related to analyticity/non-analyticity of modulation functions and various other model features. We give a precise definition of this understanding, and show that this is equivalent to statistical definitions of dynamical phases, related to uniqueness/non-uniqueness of space-time invariant measures.

Finally, we discuss applications of tools from Hamiltonian dynamics to dissipative FK dynamics. Aubry and Mather\cite{Aubry88,Mather82} successfully applied these ideas to the description of equilibria of the FK model (i.e. without driving), which can be characterized as orbits of a symplectic map (an area-preserving twist diffeomorphism\cite{Katok95}). We show that the space-time attractor arbitrarily far from equilibrium can be characterized in a similar way. As an example of an application of this, we then outline how the Converse KAM theory can be used to determine dynamical phase transitions.

We give rigorous mathematical proofs to all the statements in the paper. For easier reading, most of the proofs have been moved to the Appendix at the end of the paper. A detailed (and quite technical) proof of the main tool, the Theorem \ref{t:project}, has already been reported in Ref. \onlinecite{Slijepcevic14} (we outline the core of the argument here). The results on stability of synchronization and characterization of phase transitions, as well as applications, are new.

\section{Setting and numerical background} \label{sec:one}

\subsection{The model}

Consider a set of particles in one dimension, denote position of each by 
a real number $u_{j}$ and the configuration of the entire chain as $u=(u_{j})_{j\in \mathbb{%
Z}}$. The energy of the generalized FK model can be formally defined as
\begin{equation}
H=\sum_{j\in \mathbb{Z}}\left( W(u_{j+1}-u_{j})-V(u_{j})\right).
\label{r:fk}
\end{equation}%
Here $V(u)$ is a periodic on-site potential (i.e. $V(u+1)=V(u)$), and $W(p)$ is
a generalized elastic coupling, by which we mean a strictly convex function (i.e. such that $W^{\prime \prime
}\geq \delta >0$ for some $\delta >0$). The standard FK model
is defined by particular functions 
\begin{eqnarray*}
V(u) &=& -k\cos (2\pi u)/(2\pi )^{2}, \\
W(p) &=& (p-\mu )^{2}/2
\end{eqnarray*}
We focus here on the dissipative, overdamped (also called gradient) dynamics, given by the equations%
\begin{eqnarray}
\frac{d}{dt}u_{j}(t) &=&-\frac{\partial }{\partial u_{j}}H(u)+f(t), 
\nonumber \\
\frac{d}{dt}u_{j}(t) &=&W^{\prime }(u_{j+1}-u_{j})-W^{\prime }(u_{j}-u_{j-1})
\nonumber \\
&&+V^{\prime }(u_{j})+f(t).  \label{r:eq}
\end{eqnarray}

The driving force $f(t)$ can be constant, in which case we consider \textit{%
DC} dynamics of the FK model. Alternatively, $f$ can be time-periodic (\textit{%
AC} dynamics). As we can reparametrize the time, we can in the AC
case assume that $f(t+1)=f(t)$.

We summarize the standing assumptions on the model (\ref{r:eq}):

\begin{description}
\item[(A)] $W$ is $C^{2}$, strictly convex, such that $W^{\prime \prime
}\geq \delta >0;$ $V$ is $1$-periodic;\ in the AC\ case \thinspace $W,V,f\,\ 
$are real analytic.
\end{description}

\subsection{Ground states and synchronization}

We first briefly recall the structure of the ground states of the chain (\ref%
{r:fk}), independently described by Aubry and Mather\cite{Aubry88,Mather82}. 
First note that all the equilibria of (\ref{r:fk}),
that is the configurations $u=(u_{j})_{j\in \mathbb{Z}}$ which solve $(\ref{r:eq})$
with $f(t)=\partial u_{j}(t)/\partial t=0$, can be interpreted as orbits of a
2-dimensional map. The Aubry-Mather theory
focuses on ground states (as a subset of equilibria) defined as follows. 
As the total energy $H$ of the infinite chain is
typically infinite, the ground states are defined as configurations for
which the energy of any finite subsegment of the chain $%
(u_{m},u_{m+1},...,u_{n})$ is minimal if we fix positions of end particles $%
u_{m},u_{n}$ and allow all others to arbitrarily vary. Importantly, each
ground state has a well defined \textit{mean spacing} 
\[
\rho (u)=\lim_{n-m\rightarrow \infty }(u_{n}-u_{m})/(n-m).
\] 
Furthermore, one can find
a ground state for any rational (\textit{commensurate configurations}) or
irrational (\textit{incommensurate configurations}) $\rho (u)$ (Ref. 
\onlinecite{Bangert88}, Theorems 3.16 and 3.17), thus the structure of ground states is
quite rich.

Important tools when studying ground states, as well as driven dynamics, are based
on considering ordering and intersection of configurations. 
We first recall the definition of spatial translations $S_{m,n}$ of configurations (defined for any integers $m,n$):
\[
S_{m,n}u_{j}=u_{j-m}+n.
\]
If $T_{t}$, $t \geq 0$ is the time evolution of (\ref{r:eq}), that means
$T_{t}u(s)=u(t+s)$, then by definition $S_{m,n}$ and $T_{t}$ commute.
We say that two configurations $u,v$ intersect if their graphs (as
functions $j\mapsto u_{j}$) intersect; more precisely if for some $j$, 
$(v_{j+1}-u_{j+1})(v_j-u_{j})\leq 0$ (but $u$, $v$ not equal). 

The operators $T$, $S$ enable us to precisely define synchronized solutions of 
(\ref{r:eq}). We consider a solution $u(t)$ synchronized if the trajectory
of each particle is time-periodic (where we identify $u$ and $u+n$ for interger $n$), and the trajectory
of each particle coincides (up to a shift in phase). We introduce an equivalent definition of a synchronized 
solution in terms of intersection of configurations, which will be very useful in the following.

\begin{definition} \label{d:one}
We say that a solution $u(t)$ of (\ref{r:eq})\ is synchronized, if 
for any integers $m,n,s$ and any $t \in \mathbb{R}$, $u(t)$ 
and $S_{m,n}u(t+s)$ do not intersect.
\end{definition}

\noindent (In the definition we implicitly assume that $u(t)$ exists for all times $t \in \mathbb{R}$.)
An immediate consequence is that all the spatial and temporal translates of a
synchronized solution can be represented as a one parameter family of
configurations. If we identify $u$ and $u+n$ for all
integers $n$ (as we will often do in the following), $S,T$ translates of a synchronized
solution can be parametrized by a subset of a circle. Elementary results of the theory of
one-dimensional dynamical systems (the Denjoy theory, Ref. \onlinecite{Katok95}, Section 12) then
imply that they typically (i.e. for irrational $\rho$) either cover an entire
circle, or its Cantor subset.

Ground states are important examples of synchronized solutions, as shown by the Aubry-Mather theory (Ref. \onlinecite{Bangert88}, Theorem 3.13). Note that for ground states, $T_t$ is constant, thus synchronization is equivalent to non-intersection of spatial translates.

In general, it is rigorously known that synchronized solutions exist. This has been (partially very recently)
proved by Baesens, MacKay and Qin in the DC case\cite{Baesens98,Qin10,Qin11},
and by Qin in the AC case\cite{Qin13}:

\begin{theorem}
\label{t:synchronized} Assuming (A), there exists a synchronized
solution $u(t)$ of (\ref{r:eq}) for any (AC or DC) forcing $f(t)$ and any (rational or
irrational) mean spacing $\rho \in \mathbb{R}$.
\end{theorem}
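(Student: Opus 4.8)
The plan is to construct a synchronized solution as a limit of equilibrium-like objects associated with a fixed mean spacing, exploiting the order-preserving structure of the gradient flow (\ref{r:eq}). First I would reduce to the case of rational $\rho = p/q$: for such $\rho$ one looks for a solution $u(t)$ satisfying the covariance relation $u_{j+q}(t) = u_j(t) + p$, which turns the infinite chain into a finite-dimensional system on the $q$-torus (in the AC case, an extended phase space including the periodic time variable). On this compact phase space the flow $T_t$ is well-defined for all $t$, and one seeks a periodic orbit — a fixed point of the time-$1$ map in the AC case, or a point where uniform sliding has a rational rotation-type average in the DC case. Existence of such a periodic orbit with prescribed spatial average follows from a degree/fixed-point argument (or, in the DC case, from the classical Middleton-type no-passing estimates guaranteeing a well-defined asymptotic velocity that depends monotonically on a parameter one can tune to hit $\rho$). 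I would then verify Definition \ref{d:one} for this orbit: because the dynamics is order-preserving and the orbit is $S_{q,p}$-invariant, any intersection of $u(t)$ with some $S_{m,n}u(t+s)$ would have to persist and in fact the number of intersections is non-increasing in $t$ (a discrete lap-number / Sturm-type argument), which together with the compactness forces the intersection number to be zero — this is exactly non-intersection of all $S,T$ translates.

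For irrational $\rho$, I would take a sequence of rationals $p_k/q_k \to \rho$, obtain synchronized solutions $u^{(k)}(t)$ for each, normalize them (fix, say, $u^{(k)}_0(0) \in [0,1)$), and pass to a limit. The key compactness input is a uniform modulus of continuity in $j$: because each $u^{(k)}$ is a bona fide solution of (\ref{r:eq}) with $W'' \geq \delta$ and $V$, $W$, $f$ bounded on the relevant range, one gets a priori bounds on $u^{(k)}_{j+1} - u^{(k)}_j$ and on $\partial_t u^{(k)}_j$ uniform in $k$ and $j$ (standard parabolic-type estimates for the infinite ODE lattice, using convexity of $W$ to control the spring terms). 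Arzelà–Ascoli then yields a subsequential limit $u(t)$ that solves (\ref{r:eq}) and has mean spacing $\rho$. Non-intersection is a closed condition under this convergence — if $u(t)$ intersected some translate transversally the approximants would too for large $k$, contradicting their synchronization; the borderline tangential case is handled again by the strong maximum principle / strict monotonicity of the flow, which rules out new tangencies appearing in the limit. Hence $u(t)$ is synchronized.

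The main obstacle is the existence step for rational $\rho$ in the AC case, where there is no energy functional to minimize (the driving makes the dynamics non-variational) and one cannot simply invoke Aubry–Mather. Here the argument genuinely needs the fixed-point/topological-degree theory for order-preserving maps on the torus together with the no-passing property — this is the substance of the results of Baesens–MacKay–Qin and Qin cited before the statement, so in the paper I would simply quote Theorem \ref{t:synchronized} as established there rather than reprove it. If a self-contained argument were wanted, the cleanest route is: (i) uniform a priori bounds as above to confine the dynamics to a compact, positively invariant region; (ii) the order-preserving property to show the time-$1$ (or time-period) map has a well-ordered attractor; (iii) a rotation-number argument à la Poincaré to extract, for each prescribed $\rho$, an orbit on which all $S,T$ translates are totally ordered, which is precisely non-intersection. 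The delicate point throughout is that "synchronized" in Definition \ref{d:one} is a statement about \emph{all} translates simultaneously, so one must upgrade a single periodic orbit to a fully ordered family — this is where Denjoy theory for the induced circle dynamics, already alluded to in the excerpt, does the final work.
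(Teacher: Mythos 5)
The paper itself gives no proof of Theorem \ref{t:synchronized}: it is quoted as an external result of Baesens--MacKay and Qin in the DC case\cite{Baesens98,Qin10,Qin11} and of Qin in the AC case\cite{Qin13}, and you correctly recognize this and propose to cite those same works, so your treatment coincides with the paper's. Your supplementary sketch (spatially periodic reduction, order-preservation/no-passing, and a limiting argument for irrational $\rho$) is broadly the strategy of those references, and is not required for the paper's purposes.
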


\subsection{Numerical observations} \label{sub:numerical}

\begin{figure}
\includegraphics[width=\linewidth]{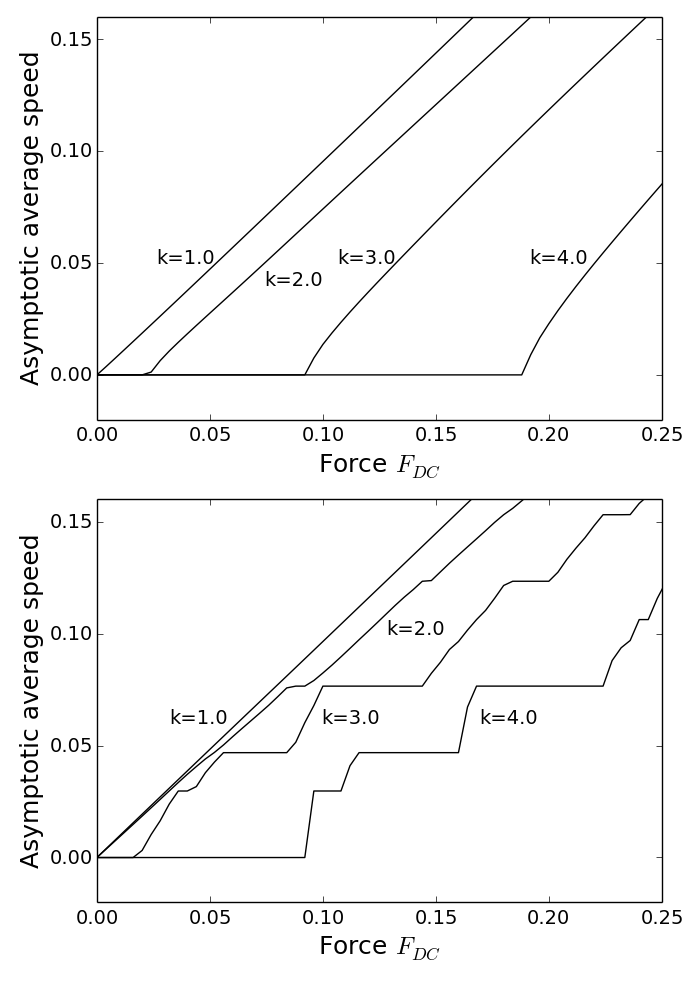}
\caption{\label{fig:dc1}The $v/F_{DC}$ dependency for a DC- (above) and AC- (below) driven standard FK chain,
for mean spacing $\rho=245/397$ and different values of $F_{DC}, k$. In the AC case, $F_{AC}=0.2$, $\nu_0=0.2$}
\end{figure}

Numerical simulations\cite{Floria96,Mazo95} showed 
that synchronization often asymptotically appears
when forcing $f(t)$ is switched on (numerically an infinite chain is
approximated with a long finite one with periodic boundary conditions). In
the DC case, it was seen that for any initial condition the dynamics is
attracted to a synchronized solution, called \textit{uniformly sliding
solution,} as long as the asymptotic average sliding speed is not
zero. The asymptotic average sliding speed $v(\rho)$ depends (continuously) only on
the forcing $f$ and the mean spacing $\rho$. Numerics further showed that
the critical \textit{depinning force} is not zero if and only if the set of
ground states with the mean spacing $\rho $ does not project in any particle
coordinate to the entire real line. 

As an example, we consider
the driven standard FK model given by the equations
\begin{eqnarray}
\frac{du_j(t)}{dt} &=& u_{j+1}-2u_j+u_{j-1}+\frac{k}{2\pi }\sin (2 \pi u_j)+f(t)  
\nonumber \\
f(t)&=& F_{DC}+F_{AC}\sin (2 \pi \nu_0 t), \nonumber
\end{eqnarray}
\noindent with parameters $k, F_{DC}, F_{AC}, \nu_0$.

A typical dependence of $v(\rho)$ on $F_{DC}$ in the DC case ($F_{AC}=0$), 
and in the AC case for a fixed $F_{AC}, \nu_0$ is in
Figure \ref{fig:dc1}.

In the AC case, $v(\rho)$ also depends continuously on $\lambda $ and $\rho $,
but is not smooth up to a certain critical value of $F_{DC}$ - the 
dynamical Aubry transition. Also it was seen 
that as long as the asymptotic speed is not zero, 
the solution typically asymptotically synchronizes. 

\section{Mathematical background} \label{sec:two}

\subsection{Asymptotics of a solution}

In this paper we consider configurations of \textit{bounded width}, by which we mean
configurations $u$ for which there exists mean spacing $\rho (u)$ and a real
number such that for some constant $K>0$ and all integers $m,n$,%
\begin{equation}
|u_{m}-u_{n}-(m-n)\rho (u)|\leq K.  \label{r:bWidth}
\end{equation}%
(With some technical care as done in Ref. \onlinecite{Slijepcevic14} but beyond the scope of this paper, all the results
also hold on more general space of configurations of bounded spacing, that
is satisfying $\sup_{m\in \mathbb{Z}}|u_{m+1}-u_{m}|<\infty $). Denote by $%
\widetilde{\mathcal{X}}$ the space of all the configurations of bounded width, and with\ $%
\widetilde{\mathcal{X}}_{\rho}$ its subspace of configurations 
with the mean spacing $\rho $. For example, as by the standard result of Aubry-Mather theory all the ground states
satisfy (\ref{r:bWidth}) with $K=1$ (Ref \onlinecite{Bangert88}, Corollary 3.16), they are in $\widetilde{\mathcal{X}}$. By
the same argument, the synchronized configurations also satisfy (\ref{r:bWidth}) with $K=1$ and are in $\widetilde{\mathcal{X}}$.

Standard results of existence of ODE on Banach spaces imply that (\ref{r:fk}) 
generates a smooth semiflow on $\widetilde{\mathcal{X}}$, with $\widetilde{\mathcal{X}}_{\rho}$
being invariant sets\cite{Baesens98,Baesens05,Slijepcevic14}. 
When considering asymptotics, we consider pointwise convergence of
configurations (i.e. the product topology on $\widetilde{\mathcal{X}}$), rather than
uniform convergence.

The usual notions of dynamical systems theory make sense only for relatively
compact trajectories. Fortunately this holds for initial conditions in 
$\widetilde{\mathcal{X}}_{\rho}$ if we identify $u$ and $u+n$ for all integer $n$. We denote $\mathcal{X}$, $\mathcal{X}_{\rho}$ to be the quotient sets with respect to that relation, and omit the subscript $n$ in $S_{m,n}$.
Now each $\mathcal{X}_{\rho}$ is compact and invariant for both spatial
translations $S_m$ and the time evolution $T_t$, $t\geq 0$ (invariance
follows from the order preserving property, Ref. \onlinecite{Slijepcevic14}, Section 4).

The usual notion of $\omega $-limit set considers all the limit points of a
trajectory as time goes to infinity. Here we consider a typically smaller set of 
physically the most relevant asymptotic trajectories, which are asymptotically 
observed with non-zero probability with respect to time and spatial
windows. A precise definition follows:

\begin{definition}
The weak $\omega $-limit set of $u$, denoted by $\widetilde{\omega}(u)$, is the
smallest closed set such that for any (arbitrarily small)\ open neighborhood 
$U$ of $\widetilde{\omega}(u)$, the ratio of $m,n$, $0\leq m\leq N$, $%
-N\leq n\leq N$ for which $T_{m}S_{n}u\in U$, converges to $1$ as $N \rightarrow \infty $.
\end{definition}

An equivalent definition of $\widetilde{\omega}(u)$ is given in Lemma \ref{l:charomega} in the Appendix.
It is easy to show that $\widetilde{\omega}(u)$ is a well-defined, closed set
and a subset of $\omega (u)$\cite{Slijepcevic14}. Unlikely the $\omega$-limit set, 
$\widetilde{\omega}$ is not necessarily connected. An interpretation of $\widetilde{\omega}$
is that, if we choose randomly a (sufficiently large) time, and a random,
arbitrarily large, spatial window of an infinite chain, we will with
asymptotic probability $1$ observe a configuration in $\widetilde{\omega}(u)$.
We propose the definition of a (space-time) attractor for a spatially extended (i.e. on
an infinite domain) system like (\ref{r:eq}) to be the set $\mathcal{A}$
which is the closure of the union of all weak $\omega $-limit sets, 
$$\mathcal{A}=\text{Cl} \left( \bigcup _{u\in \mathcal{X}}\widetilde{\omega}(u) \right).$$

Our definition of $\mathcal{A}$ results with a typically smaller set than attractor as standardly defined\cite{Miranville08}. We think our definition is physically relevant. 
One can say that the standard attractor incorporates all the asymptotics of the
dynamics, while our set $\mathcal{A}$ captures asymptotics with non-zero space-time positive probability. Furthermore, if for
example a system like (\ref{r:eq}) is space-time chaotic in the sense of
Bunimovich and Sinai\cite{Bunimovich88}, one can show that the space-time
chaos is then contained in $\mathcal{A}$\cite{Slijepcevic14a}.

\subsection{Invariant measures and dynamical phase transitions}

In addition to considering evolution of individual configurations with
respect to (\ref{r:fk}), we find useful considering simultaneous evolution
of a family of initial configurations and finding average
properties of this evolution. We make it precise by considering evolution of
probability measures, and more specifically probability
measures invariant for the spatial shift. 

We can write $S=S_{1}$, $T=T_{1}$ (the time-one map for (\ref{r:eq}%
)). Note that there exists a
huge number of $S$-invariant probability measures on the state space $%
\mathcal{X}$. For example, we can embed in $\mathcal{X}$ in many ways the
standard Bernoulli probability measure on the space of bi-infinite sequences
of $\{0, 1\}$. More generally, a $S$-invariant
probability measure is any (shift-invariant)\ random process which
constructs a configuration in $\mathcal{X}$.

Given any $S$-invariant measure $\mu$, we can consider its evolution $\mu
(t)$ with respect to (\ref{r:fk})\ by considering evolution of each
configuration (mathematically, $\mu (t)$
is the pulled measure $\mu (t)=T_{t}^{\ast }\mu (0)$). A $T$-invariant
measure is a measure which is also invariant with respect to the evolution (%
\ref{r:eq}). The importance of $S,T$-invariant measures on $\mathcal{X}$ is
in the following fact:

\begin{proposition}
\label{p:attractor}The attractor $\mathcal{A}$ coincides with the union of
supports of all $S,T$-invariant measures on $\mathcal{X}$. Furthermore, for
any $\rho \in \mathbb{R}$, $\mathcal{A}_{\rho }=\mathcal{A\cap X}_{\rho }$
is not empty.
\end{proposition}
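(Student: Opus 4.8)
The plan is to establish the two inclusions $\bigcup_{\mu}\operatorname{supp}(\mu)\subseteq\mathcal{A}$ and $\mathcal{A}\subseteq\bigcup_{\mu}\operatorname{supp}(\mu)$ (union over all $S,T$-invariant probability measures $\mu$ on $\mathcal{X}$) separately, and then to derive $\mathcal{A}_{\rho}\neq\emptyset$ from Theorem~\ref{t:synchronized}. The workhorse throughout is, for a fixed $u\in\mathcal{X}$, the sequence of empirical measures $\mu^{u}_{N}=\frac{1}{(N+1)(2N+1)}\sum_{m=0}^{N}\sum_{n=-N}^{N}\delta_{T_{m}S_{n}u}$ on $\mathcal{X}$; note that $\mu^{u}_{N}(U)$ is precisely the ratio appearing in the definition of $\widetilde{\omega}(u)$. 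Two easy facts about $(\mu^{u}_{N})$ will be used repeatedly: first, since the index boxes $\{0\le m\le N,\ -N\le n\le N\}$ form a F\o lner sequence for the commuting maps $S$ (invertible) and $T$, a telescoping estimate gives $\|T_{\ast}\mu^{u}_{N}-\mu^{u}_{N}\|=O(1/N)$ and $\|S_{\ast}\mu^{u}_{N}-\mu^{u}_{N}\|=O(1/N)$ in total variation, whence every weak-$\ast$ limit point of $(\mu^{u}_{N})$ is an $S,T$-invariant probability measure; second, if $u\in\mathcal{X}_{\rho}$ then every $\mu^{u}_{N}$ is supported on the compact set $\mathcal{X}_{\rho}$, so $(\mu^{u}_{N})$ does have limit points.

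For $\bigcup_{\mu}\operatorname{supp}(\mu)\subseteq\mathcal{A}$ I would argue by contradiction. Suppose some $S,T$-invariant $\mu$ has a point $x\in\operatorname{supp}(\mu)\setminus\mathcal{A}$. As $\mathcal{A}$ is closed (it is a closure) and $\mathcal{X}$ is metrizable, pick an open $V\ni x$ with $\overline{V}\cap\mathcal{A}=\emptyset$, so that $\mu(\overline{V})\ge\mu(V)>0$. For every $u\in\mathcal{X}$ one has $\widetilde{\omega}(u)\subseteq\mathcal{A}$, hence $\widetilde{\omega}(u)\cap\overline{V}=\emptyset$, so $U:=\mathcal{X}\setminus\overline{V}$ is an open neighbourhood of $\widetilde{\omega}(u)$ and the definition of $\widetilde{\omega}(u)$ forces $\mu^{u}_{N}(\overline{V})=1-\mu^{u}_{N}(U)\to 0$ as $N\to\infty$. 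On the other hand, by $S,T$-invariance of $\mu$, $\int_{\mathcal{X}}\mu^{u}_{N}(\overline{V})\,d\mu(u)=\mu(\overline{V})$ for every $N$ (the integrand is the box-average of $u\mapsto\mathbf{1}_{\overline{V}}(T_{m}S_{n}u)$, each term of which integrates to $\mu((T_mS_n)^{-1}\overline V)=\mu(\overline{V})$). Since $0\le\mu^{u}_{N}(\overline{V})\le 1$, dominated convergence gives $\mu(\overline{V})=\lim_{N}\int\mu^{u}_{N}(\overline{V})\,d\mu(u)=0$, a contradiction; hence $\operatorname{supp}(\mu)\subseteq\mathcal{A}$ for every such $\mu$, and since $\mathcal{A}$ is closed, $\overline{\bigcup_{\mu}\operatorname{supp}(\mu)}\subseteq\mathcal{A}$.

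For the reverse inclusion, since $\mathcal{A}=\text{Cl}\bigl(\bigcup_{u}\widetilde{\omega}(u)\bigr)$ it suffices to show that $\bigcup_{\mu}\operatorname{supp}(\mu)$ is closed and that $\widetilde{\omega}(u)\subseteq\bigcup_{\mu}\operatorname{supp}(\mu)$ for each $u$. Closedness: if $x_{k}\to x$ with $x_{k}\in\operatorname{supp}(\mu_{k})$ and each $\mu_{k}$ $S,T$-invariant, then $\mu:=\sum_{k}2^{-k}\mu_{k}$ is again an $S,T$-invariant probability measure on $\mathcal{X}$, and $\operatorname{supp}(\mu)\supseteq\overline{\bigcup_{k}\operatorname{supp}(\mu_{k})}\ni x$. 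For the inclusion, fix $u$, put $\rho=\rho(u)$, and let $\mathcal{L}(u)$ be the closure of $\bigcup\{\operatorname{supp}(\nu):\nu\text{ a weak-}\ast\text{ limit point of }(\mu^{u}_{N})\}$; this is a nonempty closed set and, by the first of the two facts above, each such $\nu$ is $S,T$-invariant. The key claim is that $\mathcal{L}(u)$ has the density-$1$ property that characterizes $\widetilde{\omega}(u)$: given an open $U\supseteq\mathcal{L}(u)$, if $\mu^{u}_{N}(U)\not\to 1$ then $\mu^{u}_{N}(\mathcal{X}_{\rho}\setminus U)\ge\varepsilon$ along a subsequence, and since $\mathcal{X}_{\rho}\setminus U$ is compact and all $\mu^{u}_{N}$ live on $\mathcal{X}_{\rho}$, a further subsequence converges weak-$\ast$ to a limit point $\nu$ with $\nu(\mathcal{X}_{\rho}\setminus U)\ge\varepsilon$ by the portmanteau theorem; then $\operatorname{supp}(\nu)$ meets $\mathcal{X}_{\rho}\setminus U$, contradicting $\operatorname{supp}(\nu)\subseteq\mathcal{L}(u)\subseteq U$. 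Hence $\widetilde{\omega}(u)\subseteq\mathcal{L}(u)\subseteq\overline{\bigcup_{\mu}\operatorname{supp}(\mu)}=\bigcup_{\mu}\operatorname{supp}(\mu)$, and taking the union over $u$ and the closure yields $\mathcal{A}\subseteq\bigcup_{\mu}\operatorname{supp}(\mu)$, so equality holds.

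Finally, for $\mathcal{A}_{\rho}=\mathcal{A}\cap\mathcal{X}_{\rho}\neq\emptyset$: Theorem~\ref{t:synchronized} provides a synchronized solution $u(t)$ of mean spacing $\rho$; it satisfies (\ref{r:bWidth}) with $K=1$, so $u(0)\in\mathcal{X}_{\rho}$, and by the two facts above $(\mu^{u(0)}_{N})$ has a weak-$\ast$ limit point $\nu$ which is an $S,T$-invariant probability measure supported on $\mathcal{X}_{\rho}$. By the inclusion already proved, $\operatorname{supp}(\nu)\subseteq\mathcal{A}$, hence $\emptyset\neq\operatorname{supp}(\nu)\subseteq\mathcal{A}\cap\mathcal{X}_{\rho}=\mathcal{A}_{\rho}$. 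The main obstacle is the key claim in the third paragraph — showing that $\mathcal{L}(u)$ inherits the defining density-$1$ property of $\widetilde{\omega}(u)$, i.e. reconciling the ``smallest closed set'' description of the weak $\omega$-limit set with the asymptotics of the empirical measures; the remaining ingredients (the F\o lner telescoping, the dominated-convergence step, and the appeal to Theorem~\ref{t:synchronized}) are essentially routine.
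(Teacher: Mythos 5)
Your argument is correct, and while it is built on the same workhorse as the paper --- the space-time empirical measures $\mu^{u}_{N}$, whose weak-$\ast$ limit points are $S,T$-invariant by the F\o lner/telescoping estimate and live on the compact set $\mathcal{X}_{\rho}$ --- the way you close each inclusion differs from the paper's proof. For $\mathcal{A}\subseteq\bigcup_{\mu}\operatorname{supp}(\mu)$, the paper fixes $v\in\widetilde{\omega}(u)$, uses the characterization of $\widetilde{\omega}(u)$ in Lemma~\ref{l:charomega} to extract, for a shrinking basis of neighbourhoods $\mathcal{V}_n$ of $v$, invariant limit measures $\nu_n$ charging $\mathcal{V}_n$, and then exhibits $v$ in the support of $\sum_n 2^{-n}\nu_n$; you instead introduce the closed set $\mathcal{L}(u)$ (closure of the supports of all limit points of $\mu^u_N$), prove it has the density-$1$ visiting property, and invoke the \emph{minimality} in the definition of $\widetilde{\omega}(u)$ to get $\widetilde{\omega}(u)\subseteq\mathcal{L}(u)$, proving closedness of $\bigcup_\mu\operatorname{supp}(\mu)$ yourself via the $\sum_k 2^{-k}\mu_k$ trick rather than citing it. For the converse inclusion $\bigcup_{\mu}\operatorname{supp}(\mu)\subseteq\mathcal{A}$, the paper appeals to the Birkhoff Ergodic Theorem, the Ergodic Decomposition Theorem and an external closedness lemma and omits the details, whereas your dominated-convergence argument (integrating $u\mapsto\mu^u_N(\overline V)$ against $\mu$ and using $T,S$-invariance) is self-contained, avoids ergodic decomposition altogether, and is in my view a genuine simplification; its only inputs are the well-definedness of $\widetilde{\omega}(u)$ (i.e. that the smallest closed set in the definition exists and itself enjoys the density-$1$ property) and continuity of $T_1$ and $S$ on $\mathcal{X}_\rho$, both of which the paper also takes for granted (citing Ref.~\onlinecite{Slijepcevic14}). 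Finally, for nonemptiness of $\mathcal{A}_\rho$ the paper simply uses compactness of $\mathcal{X}_\rho$ and a Krylov--Bogolyubov-type lemma, so your appeal to Theorem~\ref{t:synchronized} is sound but heavier than necessary: any $u\in\mathcal{X}_\rho$ (e.g.\ $u_j=j\rho$) would do, since the empirical-measure construction only needs forward invariance of the compact set $\mathcal{X}_\rho$.
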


\noindent (We postpone the proof of this as most the other other claims to the Appendix.)
Thus the study of the attractor $\mathcal{A}$ is equivalent to understanding
the structure of $S,T$-invariant measures. 

We can now extend the Definition \ref{d:one} of synchronized configurations and solutions
to measures. We say that a {\it synchronized measure} is a $S,T$-invariant probability measure such that
no two configurations in its support intersect. Clearly by definition, a synchronized 
measure is supported on synchronized trajectories. Denote by $\mathcal{S}_{\rho }\subset \mathcal{A}_{\rho }$ the union of
supports of all the synchronized measures on $\mathcal{X}_{\rho}$. We will see
in the next section that $\mathcal{S}_{\rho }$ is not empty. 

One typically distinguishes two dynamical phases of (\ref{r:eq}), depending on whether
transport is possible. In the depinned phase, asymptotically each
particle can slide over the entire $\mathbb{R}$, while in the pinned phase
some of the regions are off-limit as the force is too weak as compared to
the potential $V$ and the related Peierls-Nabarro barrier\cite{Floria96}. 
We propose a rigorous way to define pinned vs. depinned phase, which works both
in the DC and AC case, by using the language of
measures, in the spirit of Mather\cite{Mather89}. 

Let $\pi_0 : {\cal X} \rightarrow \mathbb{S}^1$ be the projection of a configuration $u$
to $u_0$ (it projects onto the circle $\mathbb{S}^1=\mathbb{R}/\mathbb{Z}$, as we identify
$u$ and $u+n$ for integer $n$).

\begin{definition} \label{d:two}
We say that (\ref{r:eq}) is in the depinned phase for a given mean spacing $\rho \in \mathbb{R}$, 
if there exists a synchronized measure $\mu$ with the mean spacing $\rho$ such that $\pi_0(\textnormal{supp}({\mu }))$ 
is onto (i.e. the entire $\mathbb{S}^1$); 
otherwise it is in the pinned phase.
\end{definition}

\noindent Here $\textnormal{supp}(\mu)$ denotes the support of $\mu$. 
Note that, as we consider $S$-invariant measures, this definition
is independent of the projected coordinate. For a given
one-parameter family of FK-chains or forces $f(t)$, the dynamical Aubry
transition for a given $\rho \in \mathbb{R}$ is the value of the parameter
in which the pinned/depinned phase changes.

Equivalence of the definition of depinned/pinned phases as above to analyticity (resp. non-analyticity) of modulation functions, as well as to the dependence of the average speed on average driving force as described in subsection \ref{sub:numerical},
was essentially shown by Qin\cite{Qin11,Qin13}.

Analogously to thermodynamics, one can expect that the difference in phases of (\ref{r:eq}) is
whether there is a unique (ergodic) $S,T$-invariant measure with a chosen mean spacing or not. We will see later that this is
indeed a characterization if $\rho$ is irrational (also for rational $\rho$ with additional technical restrictions): 
the $S,T$-invariant measure is unique in and strictly in the 
depinned phase.

\subsection{Dynamics of intersections of solutions}

The key feature of the dynamics (\ref{r:eq}), that it is order-preserving,
was used by Middleton, Baesens and MacKay\cite{Baesens98,Baesens05,Middleton92},
in proving results on asymptotics of (mostly) finite
chains with DC dynamics. We will here use a generalization of that
related to counting intersections of two solutions. We already introduced the notion of intersection of two
configurations. It is important to distinguish transversal and
non-transversal intersections, as in Figure 2 (see Ref. \onlinecite{Slijepcevic14} for
a precise classification).
 
\begin{figure}
\includegraphics[width=\linewidth]{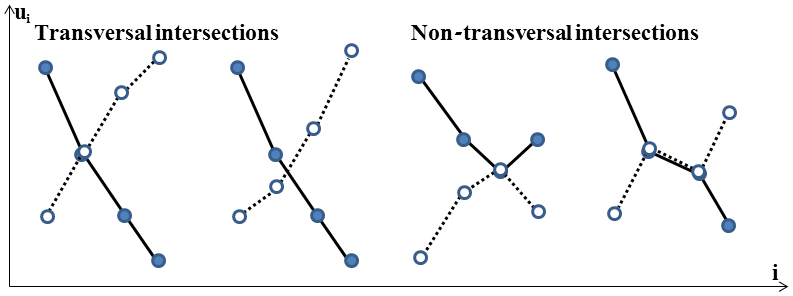}
\caption{\label{fig:intersections}Examples of transversal and non-transversal intersections of configurations.}
\end{figure}

A stronger version of the
order-preserving rule is that, if $u(0)$
and $v(0)$ are two chain configurations with at most finite number of
intersections, then the following is known and can be
proved by considering linearization of (\ref{r:eq}) (Ref. 
\onlinecite{Slijepcevic14}, Sections 3 and 4):

\begin{description}
\item[(I1)] The number of intersections of $u(t)$ and $v(t)$ is a
non-increasing function of $t$,

\item[(I2)] If $u(t)$ and $v(t)$ intersect non-transversally at $t_{0}$,
then the number of intersections strictly drops at $t=t_{0}$.
\end{description}

These ideas originate from 1D parabolic PDE's, where they have been
extensively used to describe their asymptotics\cite{Angenent88a,Fiedler89,Joly10}.
They are, however, not
directly applicable to the dynamics (\ref{r:eq}) (or also to PDE's on
unbounded domains), as two arbitrary configurations $u,v$ (with the same
mean spacing)\ typically intersect infinitely many times. We resolve this by
considering \textit{an average number of intersections }with respect to a
probability measure on the state space $\mathcal{X}$. Precisely, let $%
\mathcal{I}(u,v)\in \{0,1\}$ be the function which assigns $1$ if $u,v$
intersect in the interval $[0,1)$, otherwise it is $0$. If $\mu \,\ $is any $%
S$-invariant probability measure on $\mathcal{X}$, we define the average
number of self-intersections of $\mu $ as
\begin{equation}
\mathcal{I}(\mu )=\sum_{n\in \mathbb{Z}}\int \mathcal{I}(u+n,v)d\mu (u)d\mu (v)\text{.}
\label{r:selfintersect}
\end{equation}

The meaning of (\ref{r:selfintersect}) is the following: the expression $%
\int I(u,v)d\mu (u)d\mu (v)$ is the probability that two randomly chosen
configurations $u,v$ intersect in the interval $[0,1)$. As $\mathcal{\mu }$
is invariant for the spatial shift $S$, this is also the probability of
finding an intersection in any interval $[m,m+1)$. We have a sum over $n$ in
the definition of $\mathcal{I(\mu)}$ to make sure it is well defined on the
quotient space $\mathcal{X}$, as we
identify configurations $u$ and $u+n$. It is easy to
check $\mathcal{I(\mu )}$ is always finite\cite{Slijepcevic14}. One can
now show that $\mathcal{I(\mu )}$ has properties mimicking (I1), (I2),
without any restrictions to chosen configurations and measures. A rigorous
proof of the following is in Ref. \onlinecite{Slijepcevic14}; we sketch the argument in the Appendix.

\begin{theorem}
\label{t:lyapunov}If $\mu $ is an $S$-invariant probability measure with
evolution $\mu (t)$ with respect to (\ref{r:eq}), then:

\begin{description}
\item[(M1)] The function $t\mapsto \mathcal{I(}\mu (t)\mathcal{)}$ is
non-increasing;

\item[(M2)] If for some $t_{0}$, there are $u$, $v$ in the support of $\mu
(t_{0})$ with a non-transversal intersection, then $t\mapsto \mathcal{I(}\mu
(t)\mathcal{)}$ is strictly decreasing at $t_{0}$ (i.e. $\mathcal{I(}\mu
(t+\varepsilon )\mathcal{)<I(}\mu (t-\varepsilon )\mathcal{)}$ for any $%
\varepsilon >0$).
\end{description}
\end{theorem}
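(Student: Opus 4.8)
The plan is to derive (M1) and (M2) from the pairwise facts (I1)--(I2) by passing to a finite spatial window and using the $S$-invariance of $\mu$ to kill the flux of intersections through the window's boundary, so that the only surviving effect is the destruction of intersections in the interior. Write $\mathcal{I}(\mu(t))=\int\sum_{n\in\mathbb{Z}}\mathcal{I}(T_tu+n,T_tv)\,d\mu(u)\,d\mu(v)$, which is finite as recalled above; note that for $\mu\times\mu$-a.e. pair only finitely many sheets $n$ are relevant (those for which $(T_tu+n)-T_tv$ ever changes sign), so all the counts below are finite.

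For (M1), fix $t_1<t_2$; for a pair $(u,v)$ and $N\in\mathbb{N}$ let $c_N(t)$ be the number of sign changes of the family $\{(T_tu+n)-T_tv\}_n$ over the edges $j=0,\dots,N-1$. Averaging the $S$-invariance over the shifts $S^0,\dots,S^{N-1}$ gives $\mathcal{I}(\mu(t))=\frac{1}{N}\int c_N(t)\,d\mu(u)\,d\mu(v)$ for every $N$. Along the flow $c_N$ changes only through (i) creation of a sign change in the interior --- ruled out by the Sturm/nodal monotonicity of the linearized equation of (\ref{r:eq}) that underlies (I1); (ii) destruction of a sign change in the interior --- contributing $\le 0$; or (iii) migration of sign changes across the two ends of the window, so that $c_N(t_2)-c_N(t_1)=\Phi_0-\Phi_N-(\text{number destroyed in }[t_1,t_2])$, where $\Phi_k$ is the net flux of sign changes rightward past site $k$ during $[t_1,t_2]$. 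Translating the pair by $S^{-N}$ turns $\Phi_N$ into $\Phi_0$, so $\int(\Phi_0-\Phi_N)\,d\mu(u)\,d\mu(v)=0$ by $S$-invariance, and we conclude $\mathcal{I}(\mu(t_1))-\mathcal{I}(\mu(t_2))=\frac{1}{N}\int(\text{number of sign changes destroyed in the window during }[t_1,t_2])\,d\mu(u)\,d\mu(v)\ge 0$, which is (M1), valid for every $N$.

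For (M2), assume $u^*,v^*\in\textnormal{supp}\,\mu(t_0)$ have a non-transversal intersection; pulling back by $T_{t_0}$ and using $\textnormal{supp}(\mu\times\mu)=\textnormal{supp}\,\mu\times\textnormal{supp}\,\mu$, there is $(u_0,v_0)\in\textnormal{supp}(\mu\times\mu)$ such that $(T_{t_0}u_0,T_{t_0}v_0)$ intersects non-transversally, say on the relevant sheet $n^*$ near edge $j^*$. By the local form of (I2), for a suitable box $B=[j^*-R,j^*+R]$ and a suitable $\varepsilon>0$, the number of sign changes of $(T_tu_0+n^*)-T_tv_0$ inside $B$ is strictly smaller at $t_0+\varepsilon$ than at $t_0-\varepsilon$, with the ends of $B$ carrying no sign change (of any sheet) for $t\in[t_0-\varepsilon,t_0+\varepsilon]$. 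Since these are finitely many strict inequalities and the semiflow is continuous in the product topology, there is a neighborhood $\mathcal{V}$ of $(u_0,v_0)$ on which every pair has the same strictly dropping box count and no migration through $\partial B$, hence destroys at least one sign change during $[t_0-\varepsilon,t_0+\varepsilon]$. As $(u_0,v_0)\in\textnormal{supp}(\mu\times\mu)$, $\mathcal{V}$ has positive $\mu\times\mu$-measure, so feeding it into the identity from (M1) (with $N$ large enough to contain $B$) yields $\mathcal{I}(\mu(t_0-\varepsilon))-\mathcal{I}(\mu(t_0+\varepsilon))>0$, i.e. (M2).

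The principal difficulty is the mechanism behind (I1)--(I2) itself: establishing, for the linearized equation of (\ref{r:eq}) (a cooperative lattice ODE), that the number of sign changes never increases and drops strictly at a non-transversal intersection. This is the technically delicate ingredient --- it genuinely uses the strict convexity $W''\ge\delta>0$ and is carried out in Ref.~\onlinecite{Slijepcevic14}, Sections 3--4 --- and everything above is built on it. The secondary obstacle is the bookkeeping of sign changes entering and leaving a finite window: one must show the two boundary fluxes cancel exactly in the $\mu\times\mu$-average (this is where $S$-invariance enters), and, in (M2), that a box localizing a prescribed tangency can be chosen so its boundary stays clear of intersections robustly --- so that the tangency forces annihilations on a positive-measure set rather than being absorbed by flux through the window's ends.
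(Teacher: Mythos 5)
Your proposal is correct and takes essentially the same route as the paper's own (outlined) proof: per-pair monotonicity from (I1)--(I2) within a spatial window, cancellation of the boundary flux of intersections in the $\mu\times\mu$-average via $S$-invariance, and a continuity-plus-support argument upgrading a single non-transversal intersection to a strict drop, with the hard Sturm-type input for the linearized lattice equation deferred to Ref.~\onlinecite{Slijepcevic14} exactly as the paper does. The only difference is cosmetic: you average sign-change counts over a length-$N$ window instead of tracking intersection curves crossing the unit window $[0,1)$.
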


The property (M1) means that $\mathcal{I}$ is a Lyapunov function on the
space of $S$-invariant measures. As for synchronized measures, $\mathcal{I}$
reaches its minimum zero (see the comment after Theorem \ref{t:project} below), 
these measures are expected to be Lyapunov stable.
We deduce implications of this to asymptotics of individual trajectories in
Section \ref{sec:four}.

\section{2D representation of the attractor} \label{sec:three}

\begin{figure*}
\includegraphics[width=\linewidth]{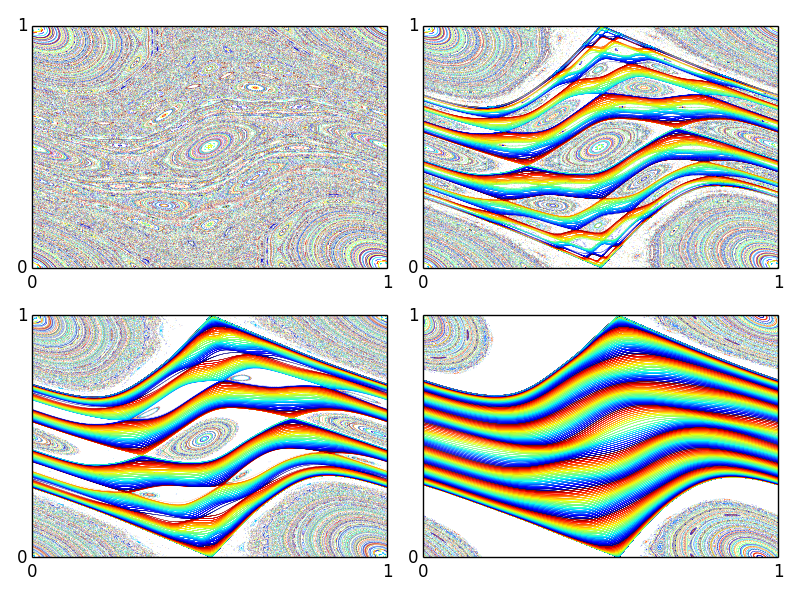}
\caption{\label{fig:representation}2D representations of the attractor of a DC-driven standard FK chain with $k=1.0$.
The DC force (left to right): $F=0$, $0.001$, $0.005$, $0.05$. The same color corresponds to the same configuration (i.e. the same orbit with respect to $h$ acting on $\mathbb{S}_{1}\times \mathbb{R}$) and its time evolution.}
\end{figure*}

We now show that the attractor $\mathcal{A}$ of the dynamics (\ref{r:eq})
can be represented as a $2$-dimensional map, which will be important for
both qualitative and quantitative description of the dynamics. This fact
has been extensively studies in the case $f=0$, as noted in Section \ref{sec:one}. It is
somewhat unexpected that this principle extends for arbitrary DC and AC
forcing. We define the projection $\pi :\mathcal{A}\rightarrow \mathbb{S}^1 \times \mathbb{R}$ with
\[
\pi (u)=(u_{0},u_{1}-u_{0})\text{.} 
\]
\begin{theorem}
\label{t:project}Any two configurations $u,v\in \mathcal{A}$ can not
intersect non-transversally. Furthermore, $\pi $ is injective.
\end{theorem}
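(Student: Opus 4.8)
The plan is to derive both assertions from the Lyapunov structure of $\mathcal{I}$ on $S,T$-invariant measures (Theorem \ref{t:lyapunov}) combined with Proposition \ref{p:attractor}, which identifies $\mathcal{A}$ with the union of supports of $S,T$-invariant measures. The guiding principle is that on an invariant measure the Lyapunov function $\mathcal{I}$ can no longer strictly decrease, so by (M2) its support cannot contain a non-transversal self-intersection; one then bootstraps from ``no non-transversal self-intersection within the support of a single invariant measure'' to the full statement about arbitrary pairs $u,v \in \mathcal{A}$ and to injectivity of $\pi$.

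First I would prove the non-transversality claim for two configurations lying in the support of the \emph{same} ergodic $S,T$-invariant measure $\mu$. Since $\mu$ is $T$-invariant, $t \mapsto \mathcal{I}(\mu(t))$ is constant; if some pair $u,v \in \operatorname{supp}(\mu)$ intersected non-transversally, (M2) would force a strict drop, a contradiction. Hence no two configurations in $\operatorname{supp}(\mu)$ intersect non-transversally. To pass to arbitrary $u,v \in \mathcal{A}$, the key step is to realize any such pair inside the support of a single invariant measure: given $u \in \operatorname{supp}(\mu_1)$ and $v \in \operatorname{supp}(\mu_2)$, consider a convex combination, or better, a joining / product-type $S,T$-invariant measure whose support contains (appropriate translates of) both $u$ and $v$ — for instance an ergodic component of $\tfrac12(\mu_1+\mu_2)$, or the measure supported on the orbit closure of $(u,v)$ under $S\times T$; one needs that two configurations ``do not intersect non-transversally'' is a closed condition and is inherited by weak-$*$ limits, which follows from the classification of intersections and semicontinuity of the intersection count. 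Care is needed because $\mathcal{I}(\mu)$ only sees self-intersections of a single measure, so the cleanest route is to build one measure $\nu$ with $\{u,v\}\subset\operatorname{supp}(\nu)$ and apply the single-measure result to $\nu$.

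For injectivity of $\pi$, suppose $u,v \in \mathcal{A}$ with $\pi(u)=\pi(v)$, i.e. $u_0=v_0$ and $u_1-u_0=v_1-v_0$, so $u_1=v_1$ as well. If $u\neq v$ they must differ at some coordinate; since they agree on two consecutive sites and the dynamics/equilibrium structure is governed by a second-order (three-term) recursion in the equilibrium case, one expects $w=v-u$ to have a sign change, producing an intersection, and at the ``double zero'' $j=0,1$ that intersection is non-transversal in the sense classified in Ref.~\onlinecite{Slijepcevic14} — contradicting the first part. The subtlety is that $u,v$ are time-dependent solutions, not equilibria, so ``agreeing at two consecutive sites at one instant'' need not propagate; I would instead argue directly that $u_0=v_0, u_1=v_1, u\neq v$ already forces a non-transversal intersection of the \emph{configurations} $u(t),v(t)$ at that time $t$ (a tangency of graphs at $j=0$ or $j=1$ without crossing, or a crossing that is pinched at a lattice point), which is exactly the forbidden configuration; combined with the non-transversality part this is a contradiction. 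The main obstacle I anticipate is the passage from single-measure statements to arbitrary pairs in $\mathcal{A}$ (constructing the right joint invariant measure and checking the closedness/semicontinuity of the non-transversal-intersection condition under the product topology), and secondarily the careful bookkeeping of what ``non-transversal intersection'' means when the two graphs are pinched precisely at the lattice sites $j=0,1$ used by $\pi$.
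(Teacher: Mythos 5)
Your proposal follows essentially the same route as the paper: place $u,v$ in the support of a single $S,T$-invariant measure (the convex combination $\tfrac{1}{2}(\mu_1+\mu_2)$ already suffices --- no joining, ergodic decomposition, or weak-$*$ semicontinuity argument is needed, and indeed an ergodic component of the combination need not contain both points in its support, so your ``cleanest route'' is the right one), then use $T$-invariance of the measure together with (M2) to exclude non-transversal intersections, and observe that $\pi(u)=\pi(v)$ with $u\neq v$ forces the difference to vanish at the two consecutive sites $0,1$, which is by definition a non-transversal intersection. This is exactly the paper's proof, and the obstacles you anticipate (constructing a joint measure, closedness under limits, the second-order recursion detour) do not arise.
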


This follows directly from Theorem \ref{t:lyapunov} (details in the Appendix).
A direct consequence is that synchronized measures are characterized as $S,T$-invariant
measures $\mu$ for which $\mathcal{I}(\mu)=0$. Here is why: if $\mathcal{I}(\mu)=0$, then
by continuity the only possible intersections in the support of $\mu$ are non-transversal, which is
impossible.

Projection $\pi$ enables us now to visualize and analyze $\mathcal{A}$ in
2D, as done in Figure \ref{fig:representation}, plotting images of the projected spatial shift map $%
h=\pi \circ S\circ \pi ^{-1}$ in the same color. As we identified orbits $u$
and $u+n$ for integer $n$, the map $h$ is well defined on the cylinder $%
\mathbb{S}_{1}\times \mathbb{R}$, and can be understood as a dynamical
analogue of an area-preserving twist map on the cylinder which describes
the attractor in the case with no force.

We will see that the approximately level circles ("rainbows" in Figure \ref{fig:representation})
correspond to depinned
synchronized trajectories, with the average coordinate $p$ corresponding to
the mean spacing $\rho$. We call them {\it KAM-circles} (borrowing terminology from the Hamiltonian dynamics), and define them as homotopically non-trivial
(i.e. not compressible to a fixed point) invariant circles of the function $%
h $ on $\mathbb{S}_{1}\times \mathbb{R}$. Analysis of the pinned/depinned
phase transition will rely on the 2D representation as in Figure  \ref{fig:representation}, and will
use the following:

\begin{theorem}
\label{c:ground} Given any mean spacing $\rho \in \mathbb{R}$, the set
$\mathcal{S}_{\rho }$ is not empty. In the depinned phase, $\mathcal{A}_{\rho }=\mathcal{S}_{\rho }$
and it projects to a KAM-circle. \end{theorem}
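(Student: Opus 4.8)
The plan is to analyze the structure via the Lyapunov function $\mathcal{I}$ and the 2D representation $\pi$. First I would establish $\mathcal{S}_\rho \neq \emptyset$: by Theorem \ref{t:synchronized} there is a synchronized solution $u(t)$ with mean spacing $\rho$; by Definition \ref{d:one}, no two $S,T$-translates of $u(t)$ intersect. One then builds a $S,T$-invariant measure supported on the closure of the $S,T$-orbit of $u(t)$ — using compactness of $\mathcal{X}_\rho$ and a Krylov–Bogolyubov averaging argument over the commuting actions of $S$ and $T$ to extract an invariant probability measure. Since non-intersection is a closed condition preserved under $S,T$ and under limits in the product topology, the resulting measure is synchronized, so $\mathcal{S}_\rho \neq \emptyset$; combined with Proposition \ref{p:attractor} this also re-confirms $\mathcal{A}_\rho \neq \emptyset$.

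Next, assume we are in the depinned phase: there is a synchronized measure $\mu$ with mean spacing $\rho$ and $\pi_0(\mathrm{supp}(\mu)) = \mathbb{S}^1$. I would show $\mathrm{supp}(\mu)$ projects onto a KAM-circle. By Theorem \ref{t:project}, $\pi$ is injective on $\mathcal{A}$, so $\pi(\mathrm{supp}(\mu)) \subset \mathbb{S}_1 \times \mathbb{R}$ is a compact set on which $h = \pi \circ S \circ \pi^{-1}$ acts. The non-intersection property of the configurations in $\mathrm{supp}(\mu)$ translates, via $\pi(u)=(u_0,u_1-u_0)$, into $h$ being an order-preserving (monotone) circle-type map on this set: two non-intersecting configurations $u, Su$ give a well-defined cyclic order on the $u_0$-coordinates, and the bounded-width bound (\ref{r:bWidth}) with $K=1$ for synchronized configurations forces the set to be a graph over $\mathbb{S}^1$ in the $u_0$-coordinate — i.e. for each point of $\mathbb{S}^1$ there is at most one fiber value, because two configurations agreeing at coordinate $0$ but differing at coordinate $1$ would intersect transversally nearby, contradicting synchronization. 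Since $\pi_0(\mathrm{supp}(\mu))$ is all of $\mathbb{S}^1$ and closed, this graph is continuous (a monotone surjection from a compact set onto $\mathbb{S}^1$ whose inverse is single-valued is a homeomorphism), homotopically nontrivial, and $h$-invariant: a KAM-circle. The rotation number of $h$ restricted to it equals $\rho$ by the mean-spacing identity.

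Finally, I would prove $\mathcal{A}_\rho = \mathcal{S}_\rho$ in the depinned phase. The inclusion $\mathcal{S}_\rho \subseteq \mathcal{A}_\rho$ is immediate. For the reverse, take any $v \in \mathcal{A}_\rho$; by Proposition \ref{p:attractor}, $v$ lies in the support of some $S,T$-invariant measure $\nu$ with mean spacing $\rho$. I claim $\mathcal{I}(\nu) = 0$, whence by the remark following Theorem \ref{t:project}, $\nu$ is synchronized and $v \in \mathcal{S}_\rho$. To see $\mathcal{I}(\nu)=0$: consider $v$ together with the synchronized KAM-circle configurations from the previous paragraph. If $v$ intersected a synchronized configuration $w$ transversally, the intersection number between the $S,T$-orbit closures of $v$ and of $w$ would be positive and, by (M1)–(M2) applied to the joint evolution, strictly decreasing unless all intersections are non-transversal — but Theorem \ref{t:project} forbids non-transversal intersections in $\mathcal{A}$, and the configurations all already lie in the attractor (so $\mathcal{I}$ has stabilized at its limiting value), forcing the intersection number to be $0$; hence $v$ does not intersect the full KAM-circle, which already projects onto $\mathbb{S}^1$. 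A configuration of mean spacing $\rho$ and bounded width that does not intersect a KAM-circle of rotation number $\rho$ must, by the circle-dynamics (Denjoy) rigidity invoked in Section \ref{sec:two}, itself lie on that circle — so $v$ is among the synchronized configurations and $\mathcal{I}(\nu)=0$. The main obstacle I expect is the last step: carefully ruling out that $v$ could be "trapped between" sheets of the attractor without intersecting the KAM-circle yet without lying on it — this requires using that the KAM-circle's $\pi_0$-projection is onto (the depinned hypothesis) to show there is no room for an extra non-intersecting fiber, i.e. showing the KAM-circle is the unique $h$-invariant graph of rotation number $\rho$ and that every $\mathcal{A}_\rho$-configuration not intersecting it coincides with it.
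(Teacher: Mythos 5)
There is a genuine gap, and it sits exactly where you yourself flag trouble: the step $\mathcal{A}_{\rho}=\mathcal{S}_{\rho}$. Your mechanism for forcing $\mathcal{I}(\nu)=0$ inverts (M1)--(M2): Theorem \ref{t:lyapunov} says $\mathcal{I}$ strictly decreases when a \emph{non-transversal} intersection is present, and is merely non-increasing otherwise. Hence the fact that $\nu$ is $S,T$-invariant (so $\mathcal{I}(\nu(t))$ is constant) only rules out non-transversal intersections in $\mathrm{supp}(\nu)$ --- which is precisely Theorem \ref{t:project} and nothing more; a constant, strictly positive $\mathcal{I}$ carried entirely by transversal intersections is perfectly consistent with (M1)--(M2), and indeed occurs in the pinned phase (equilibria with defects), so your argument would ``prove'' too much. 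The paper closes this gap with two ingredients you do not supply. First, a lemma (Lemma \ref{l:insert}): no configuration of $\mathcal{A}$ can intersect a depinned synchronized configuration more than once; the proof slides along the continuous, coordinatewise increasing curve of circle configurations (available because $\pi_0$ is a bijection from $\mathrm{supp}(\mu)$ onto $\mathbb{S}^1$ in the depinned phase) from $u$ to the last parameter at which an intersection with $v$ survives, where by continuity the intersection is non-transversal, contradicting Theorem \ref{t:project}. Second, given $v\in\mathcal{A}_{\rho}$ in the support of an invariant measure $\zeta$, one picks $u\in\mathrm{supp}(\mu)$ with $u_{0}=v_{0}$ (possible only because of depinning); if $u\neq v$ they intersect exactly once, and then Poincar\'{e} recurrence for $S\times S$ applied to $\mu\times\zeta$ on a small product neighborhood produces nearby $u',v'$ whose intersection at site $0$ recurs at infinitely many sites, contradicting the at-most-once lemma (the bound (\ref{r:bWidth}) guarantees representatives realizing infinitely many intersections). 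Your alternative exit --- ``$v$ does not intersect the circle, hence by Denjoy rigidity lies on it'' --- cannot be reached, because the non-intersection was never established; note also that touching the circle configuration at site $0$ can be a transversal crossing through a site, so it is not excluded by Theorem \ref{t:project} alone.

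A secondary flaw: in the non-emptiness step you assert that non-intersection is a closed condition in the product topology, so the Krylov--Bogolyubov limit measure is automatically synchronized. That is false: limits of strictly ordered (non-intersecting) configurations can be tangent, i.e. intersect non-transversally, and such pairs do appear in the orbit closure. The paper's proof handles exactly this point: by continuity the closure $\mathcal{B}$ of the orbit of the synchronized solution contains at worst non-transversal intersections, and these are then excluded on the support of the invariant measure by Theorem \ref{t:project}, so the measure is synchronized. With that correction your construction of a synchronized measure, and your identification of $\pi(\mathrm{supp}(\mu))$ as a homotopically non-trivial invariant circle in the depinned case, are in line with the paper.
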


The converse of the last statement in Theorem \ref{c:ground} also holds. Given a map $h$ and a point
$x=\pi(u) \in \mathbb{S}_{1}\times \mathbb{R}$, one can recover the mean spacing 
of $u$ by calculating the {\it rotation number} of $x$ with respect to $h$, defined
as the average of the first coordinate of $h$-iterates of $x$. By adapting the proof of
Theorem \ref{c:ground} given in the Appendix, one can also show that
if (\ref{r:eq}) is the pinned phase for a given $\rho$, then $h$ does not have a
KAM-circle with that rotation number. We omit the proof.

An implication is that the structure of synchronized orbits, when projected
to the cylinder, is analogous to the structure of ground states when
projected to the cylinder. Thus we can use tools from Hamiltonian dynamics
as outlined in the next section.

We now come back to the definition of the pinned vs. depinned phase and its
characterization.

\begin{corollary} \label{c:unique}
If $\rho$ is irrational and (\ref{r:eq}) in the depinned phase, then 
there is a unique ergodic $S,T$-invariant measure with the mean spacing $\rho$.
\end{corollary}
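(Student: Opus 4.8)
The plan is to use Theorem \ref{c:ground} as the structural backbone: in the depinned phase we already know $\mathcal{A}_\rho = \mathcal{S}_\rho$ and that this set projects homeomorphically (via the injective map $\pi$ of Theorem \ref{t:project}) onto a KAM-circle $\Gamma$ of $h$. So the state space that carries \emph{any} $S,T$-invariant measure of mean spacing $\rho$ is, by Proposition \ref{p:attractor}, contained in $\mathcal{A}_\rho$, hence it is identified via $\pi$ with the circle $\Gamma$. On $\Gamma$ the spatial shift $S$ becomes the circle homeomorphism $h|_\Gamma$, and its rotation number is $\rho$ (by the remark following Theorem \ref{c:ground}). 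Since $\rho$ is irrational, Denjoy theory (Ref. \onlinecite{Katok95}, Section 12) applies: $h|_\Gamma$ is either conjugate to the rigid irrational rotation $R_\rho$ or is Denjoy-type with a minimal Cantor set; in the depinned phase the projection $\pi_0$ is onto $\mathbb{S}^1$ (Definition \ref{d:two}), which forces $h|_\Gamma$ to be \emph{minimal} on all of $\Gamma$, hence topologically conjugate to $R_\rho$. An irrational rotation is uniquely ergodic, so there is exactly one $S$-invariant probability measure supported on $\mathcal{A}_\rho$; call it $\mu_\rho$.

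The remaining point is to upgrade ``unique $S$-invariant measure on $\mathcal{A}_\rho$'' to ``unique ergodic $S,T$-invariant measure of mean spacing $\rho$.'' First, $\mu_\rho$ is automatically $T$-invariant: $T_t$ commutes with $S$, so $T_t^\ast \mu_\rho$ is again an $S$-invariant probability measure, and since $\mathcal{A}_\rho$ is $T$-invariant its support stays in $\mathcal{A}_\rho$; by the uniqueness just established, $T_t^\ast\mu_\rho = \mu_\rho$. Second, $\mu_\rho$ is ergodic for the $\mathbb{Z}^2$-action generated by $S$ and $T$ (indeed already for $S$ alone), because unique ergodicity of $R_\rho$ gives ergodicity, and a measure invariant and ergodic under a sub-action of a larger abelian action is ergodic under the larger one only after a short argument — but here we get ergodicity directly from the fact that $\mu_\rho$ is the \emph{only} $S,T$-invariant measure, and the ergodic decomposition of any invariant measure consists of invariant measures, which by uniqueness must all equal $\mu_\rho$. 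Finally, any ergodic $S,T$-invariant measure with mean spacing $\rho$ has support in $\mathcal{A}_\rho$ by Proposition \ref{p:attractor}, hence equals $\mu_\rho$.

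The main obstacle is the minimality step: one must rule out that $h|_\Gamma$ is a Denjoy homeomorphism with a proper minimal Cantor set $C \subsetneq \Gamma$, for in that case $\pi_0(\mathrm{supp}(\mu))$ would be a Cantor subset of $\mathbb{S}^1$, not onto — but the hypothesis is precisely that the depinned phase provides a synchronized measure projecting \emph{onto} $\mathbb{S}^1$. One has to check that ``some synchronized measure projects onto $\mathbb{S}^1$'' genuinely implies ``$h|_\Gamma$ is minimal,'' i.e. that the wandering-interval obstruction cannot coexist with a surjective projection; this is where the identification $\mathcal{A}_\rho = \mathcal{S}_\rho$ and the injectivity of $\pi$ do the real work, since they guarantee there is only \emph{one} candidate circle $\Gamma$ and only \emph{one} invariant set on it. A secondary, more technical point flagged in the statement itself is that for rational $\rho$ the argument breaks (periodic orbits of $h|_\Gamma$ need not be unique), which is why the Corollary is stated for irrational $\rho$; I would not attempt to patch that here.
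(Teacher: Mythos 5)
Your proposal is correct and follows essentially the same route as the paper's (very terse) proof: identify $\mathcal{A}_\rho$ with a circle via Theorem \ref{c:ground} and the injectivity in Theorem \ref{t:project}, observe that $S$ induces a circle homeomorphism with irrational rotation number $\rho$, and invoke Denjoy theory for uniqueness of the invariant measure, which then settles uniqueness of the ergodic $S,T$-invariant measure since every such measure is supported in $\mathcal{A}_\rho$ by Proposition \ref{p:attractor}. The one remark: the minimality/conjugacy-to-rigid-rotation step that you single out as the main obstacle is actually superfluous, because a circle homeomorphism with irrational rotation number is uniquely ergodic whether or not it is minimal (the Denjoy case included), which is precisely the statement the paper cites from Ref. \onlinecite{Katok95}, Section 12.7.
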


The converse of Corollary \ref{c:unique} also holds, and can be shown by 
adapting the construction of Mather's connecting orbits of area-preserving
twist maps\cite{Mather93,Slijepcevic99} and related invariant measures. This would result with a rich family
of ergodic $S,T$-invariant measures with the same (pinned) rotation number;
we omit details of the construction.

Finally, the following fact
regarding intersection of depinned synchronized configurations will be the key in applications.

\begin{corollary}
\label{l:intersect}If (\ref{r:eq})\ is in the depinned phase for $\rho \in 
\mathbb{R}$, given any $u\in \mathcal{A}_{\rho }$, no configuration in $%
\mathcal{A}$ can intersect $u$ more than once.
\end{corollary}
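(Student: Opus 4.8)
The plan is to reduce the statement to the injectivity and non-transversality properties already packaged in Theorems \ref{t:project} and \ref{c:ground}. Fix $\rho\in\mathbb{R}$, assume we are in the depinned phase, and fix $u\in\mathcal{A}_\rho$. By Theorem \ref{c:ground}, $\mathcal{A}_\rho=\mathcal{S}_\rho$ projects under $\pi$ onto a KAM-circle $\Gamma\subset\mathbb{S}_1\times\mathbb{R}$, i.e. a homotopically non-trivial invariant circle of $h=\pi\circ S\circ\pi^{-1}$. Suppose for contradiction some $v\in\mathcal{A}$ intersects $u$ at least twice in the sense of Definition \ref{d:one}'s notion of intersection (i.e. there are two distinct indices where the sign of $v_j-u_j$ changes or vanishes). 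The first observation is that $v$ cannot lie in $\mathcal{A}_\rho$: if it did, then $\pi(v)\in\Gamma$ as well, and since $h$ restricted to $\Gamma$ is (semi-)conjugate to a circle homeomorphism by the Denjoy-type structure mentioned after Definition \ref{d:one}, the orbits $\{S_n u\}$ and $\{S_n v\}$ on $\Gamma$ inherit the non-intersection property of a synchronized measure — two points on an invariant circle of a monotone twist-type map give configurations that are ordered or non-intersecting. Hence $v\in\mathcal{A}_{\rho'}$ with $\rho'\neq\rho$.

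Next I would exploit the mean-spacing mismatch. Because $\rho(v)=\rho'\neq\rho=\rho(u)$, the difference $j\mapsto v_j-u_j$ has a definite linear trend: $v_j-u_j-(\rho'-\rho)j$ is bounded (both $u$ and $v$ have bounded width by the standing assumption $\mathcal{A}\subset\mathcal{X}$), so $v_j-u_j\to+\infty$ in one direction and $\to-\infty$ in the other. Combined with the count of intersections, this forces $u$ and $v$ to intersect infinitely often generically — but the hypothesis is that they intersect only a prescribed finite number of times, and we are assuming at least twice. The contradiction must come from (I1)–(I2) / Theorem \ref{t:lyapunov}: form an $S$-invariant measure $\mu$ concentrated (in the appropriate ergodic-average sense) on the pair of orbits $\{S_n u\}$ and $\{S_n v\}$, or more cleanly apply Theorem \ref{t:project} directly to the pair $u,v\in\mathcal{A}$. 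Theorem \ref{t:project} says $u$ and $v$ can only intersect transversally; two transversal intersections of configurations with different asymptotic slopes is topologically inconsistent with both being on $\mathcal{A}$ once one tracks the sign pattern of $v_j-u_j$ at $\pm\infty$, because between the two transversal crossings the sign is reversed on a bounded window while the linear trend fixes the sign at both ends to be the same (if $\rho'>\rho$, eventually $v_j>u_j$ on both far ends only when the number of sign changes is odd, so an even number $\ge 2$ is impossible, whereas an odd number $\ge 3$ is ruled out by the monotone decay of intersection count together with the KAM-circle structure forcing the minimal count to be exactly one).

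The cleanest route, and the one I would actually write, is: (i) show the intersection number $N(u,v)$ is finite and, via (I1)–(I2) applied along the semiflow together with the fact that $u,v$ are already in the forward-invariant compact set $\mathcal{A}$ (so no further strict drop can occur — the count is already stabilized on $\mathcal{A}$), that all intersections of $u,v$ are transversal; (ii) a parity argument from $\operatorname{sign}(v_j-u_j)$ as $j\to\pm\infty$, determined by $\rho'-\rho$, pins the parity of $N(u,v)$; (iii) the KAM-circle from Theorem \ref{c:ground} gives a global ordering of $\mathcal{A}_\rho$ that, when $v$ is fed in, allows at most a single transversal crossing, i.e. $N(u,v)\le 1$. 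The main obstacle I anticipate is step (i): justifying rigorously that once $u,v\in\mathcal{A}$ the intersection count can no longer drop, which requires the characterization of $\mathcal{A}$ as the union of supports of $S,T$-invariant measures (Proposition \ref{p:attractor}) together with the strict-drop property (M2) of Theorem \ref{t:lyapunov} — if a non-transversal intersection existed in $\mathcal{A}$ it would force $\mathcal{I}(\mu(t))$ to keep decreasing along an invariant measure, contradicting invariance. Spelling out that measure-theoretic argument cleanly, rather than hand-waving about "the count is stabilized", is where the real work lies.
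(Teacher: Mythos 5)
There is a genuine gap, and it is not where you locate it. Your step (i) is already the content of Theorem \ref{t:project} as stated (no two configurations of $\mathcal{A}$ intersect non-transversally), so transversality of all crossings of $u,v$ requires no additional measure-theoretic work; the measure argument you worry about is the paper's proof of Theorem \ref{t:project} itself. The real difficulty is your step (iii). Your parity argument, even granting the bounded-width comparison, only shows that for $\rho(v)\neq\rho$ the number of transversal crossings is odd (your phrasing ``$v_j>u_j$ on both far ends'' is garbled: opposite signs at $\pm\infty$ force odd parity); parity cannot exclude three, five, \dots crossings, and the assertion that the ``global ordering of $\mathcal{A}_\rho$ \dots allows at most a single transversal crossing'' is precisely the statement to be proved, for which you supply no mechanism. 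Note also that your argument never uses the depinned hypothesis in an essential way beyond the existence of the circle, yet the conclusion genuinely needs it: in the pinned phase the synchronized family is a Cantor set on the circle and a configuration can cross through a gap twice without ever being tangent to a member of the family.

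The paper closes exactly this step with a sliding/tangency argument (Lemma \ref{l:insert}), which is what your sketch is missing. In the depinned phase the support $\mathcal{S}^*_\rho$ of the synchronized measure projects under $\pi_0$ bijectively onto $\mathbb{S}^1$ (injective by synchronization, onto by depinnedness), so its lift is the image of a continuous curve $\gamma(s)$, increasing in every coordinate, with $\gamma(0)=u$. If some $v\in\mathcal{A}$ intersected $u$ twice, say between sites $j,j+1$ and $k,k+1$, one takes the largest $s$ for which $\gamma(s)$ still intersects $v$ in the finite window $j,\dots,k+1$; at this extremal parameter the intersection is non-transversal by continuity, contradicting Theorem \ref{t:project}. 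The corollary then follows because the proof of Theorem \ref{c:ground} identifies $\mathcal{A}_\rho$ with $\mathcal{S}^*_\rho$ in the depinned phase. This is the ``invariant circles are barriers'' argument you gesture at, but it must be executed via the tangency at the last sliding parameter — ordering of the family plus parity of crossings alone do not suffice. If you want to salvage your outline, replace steps (ii)--(iii) by this construction of $\gamma$ and the extremal-$s$ tangency.
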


\section{Synchronized orbits are attracting} \label{sec:four}

Attractiveness and global stability (in the sense of definitions in Section
\ref{sec:two}, i.e. ignoring probability/density 0 times and space windows) of
synchronized orbits in the depinned phase is now straightforward.

\begin{corollary}
\label{t:depinned}Assume (\ref{r:eq}) is in the depinned phase for a given $%
\rho \in \mathbb{R}$. Then for any $u\in \mathcal{X}_{\rho }$, $\widetilde{\omega%
}(u)$ consists of (depinned)\ synchronized orbits, i.e. $\widetilde{\omega}%
(u)\subset \mathcal{S}_{\rho }$.
\end{corollary}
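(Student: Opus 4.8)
The plan is to combine the 2D representation (Theorem~\ref{t:project}, via $\pi$), the characterization $\mathcal{A}_\rho = \mathcal{S}_\rho$ from Theorem~\ref{c:ground}, and the intersection-counting Lyapunov machinery (Theorem~\ref{t:lyapunov} and its consequences). Fix $u \in \mathcal{X}_\rho$. First I would recall that $\mathcal{X}_\rho$ is compact and $T_t$-invariant, so $\widetilde{\omega}(u)$ is a nonempty closed subset of $\mathcal{X}_\rho$, and that it is contained in the attractor: indeed, $\widetilde{\omega}(u)$ is supported on limit points observed with positive space-time density, hence (by Proposition~\ref{p:attractor} and the construction of $\mathcal{A}$ as the closure of the union of weak $\omega$-limit sets) $\widetilde{\omega}(u) \subset \mathcal{A}$, and since mean spacing is preserved under $S$ and $T$, in fact $\widetilde{\omega}(u) \subset \mathcal{A}_\rho$. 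By Theorem~\ref{c:ground}, in the depinned phase $\mathcal{A}_\rho = \mathcal{S}_\rho$, which is exactly the claim $\widetilde{\omega}(u) \subset \mathcal{S}_\rho$.

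So at first glance the corollary is almost immediate from Theorem~\ref{c:ground}; the substantive content is to make sure the inclusion $\widetilde{\omega}(u)\subset\mathcal{A}_\rho$ is properly justified and that "$\widetilde{\omega}(u)$ consists of synchronized orbits" follows. For the latter: every configuration in $\mathcal{S}_\rho$ lies in the support of some synchronized $S,T$-invariant measure, hence on a synchronized trajectory in the sense of Definition~\ref{d:one} (no two of its $S,T$-translates intersect). Since $\widetilde{\omega}(u)$ is itself $T$-invariant and $S$-invariant, each point of it generates a full two-sided orbit lying in $\mathcal{S}_\rho$, so the descriptor "synchronized orbits" is legitimate. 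I would also invoke Corollary~\ref{l:intersect} if needed to rule out transversal self-intersections within $\widetilde{\omega}(u)$, reinforcing that these orbits are genuinely synchronized rather than merely accumulating on $\mathcal{S}_\rho$.

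The one place requiring a little care — and the main obstacle — is establishing $\widetilde{\omega}(u) \subset \mathcal{A}$ cleanly. The set $\mathcal{A}$ is defined as the closure of the union of all weak $\omega$-limit sets, so tautologically $\widetilde{\omega}(u) \subset \mathcal{A}$; the only subtlety is that the definition of $\widetilde{\omega}$ uses a density-$1$ condition over a growing time-and-space box $\{0 \le m \le N, -N \le n \le N\}$, and I must check that this is consistent with the mean spacing being preserved so that the limit points actually land in $\mathcal{X}_\rho$ and not in some $\mathcal{X}_{\rho'}$. Since $S_m$ and $T_t$ both commute with the dynamics and neither changes $\rho(u)$ (the former shifts indices, the latter adds a common translation $\int f$), all translates $T_m S_n u$ have the same mean spacing $\rho$, $\mathcal{X}_\rho$ is closed in the product topology, and therefore $\widetilde{\omega}(u) \subset \mathcal{X}_\rho \cap \mathcal{A} = \mathcal{A}_\rho$.

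Putting it together: $\widetilde{\omega}(u)$ is a nonempty closed $S,T$-invariant subset of $\mathcal{A}_\rho$; in the depinned phase Theorem~\ref{c:ground} gives $\mathcal{A}_\rho = \mathcal{S}_\rho$; hence $\widetilde{\omega}(u) \subset \mathcal{S}_\rho$, and every point of it lies on a synchronized orbit. I expect the total write-up to be short — essentially a two-line deduction from Theorem~\ref{c:ground} once the membership $\widetilde{\omega}(u)\subset\mathcal{A}_\rho$ is recorded — which is consistent with the paper's remark that this corollary is "now straightforward." The only thing I would be slightly wary of is whether the density-$1$ box in the definition of $\widetilde{\omega}$ needs to be reconciled with a one-sided versus two-sided time direction, but since $u\in\mathcal{X}_\rho$ has a relatively compact forward trajectory in the quotient and $T_t$ extends the relevant limit objects to full orbits, this causes no difficulty.
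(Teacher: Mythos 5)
Your proposal is correct and follows essentially the same route as the paper: the paper's proof is exactly the two-line deduction $\widetilde{\omega}(u)\subset\mathcal{A}_{\rho}$ (by definition of $\mathcal{A}$, with mean spacing preserved) combined with $\mathcal{A}_{\rho}=\mathcal{S}_{\rho}$ from Theorem \ref{c:ground}. Your additional checks (that limit points stay in $\mathcal{X}_{\rho}$ and that points of $\mathcal{S}_{\rho}$ lie on synchronized orbits) are sound elaborations of the same argument, not a different approach.
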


\begin{proof}
By definition, $\widetilde{\omega}(u)\subset \mathcal{A}%
_{\rho }$, and by Theorem \ref{c:ground}, $\mathcal{A}_{\rho }=\mathcal{S}%
_{\rho }$.
\end{proof}

\vspace{1ex}

In the pinned phase, one can not expect such general results. For example,
even in the stationary case $f(t)=0$, the structure of the pinned part of $%
\mathcal{A}$ is quite complex (it is analogous to understanding Birkhoff
regions of instability of area-preserving twist maps, whose complexity is
still not fully understood). We can, however, describe a relatively rich
family of configurations in the basin of attraction of $\mathcal{S}_{\rho }$: ground states of FK model
with defects ({\it discommensurations}), that is with missing or squeezed in extra particles within the
ground state structure\cite{Braun:04,Floria96}.
The following abstract definition generalizes this notion: we say that a
configuration $u$ has $k$ defects, if $k$ is the maximal number of
intersections of $u$ and $S_{m,n}u$ over all integers $m,n$.

\begin{theorem}
\label{t:density} In the DC case, if $u\in \mathcal{X}_{\rho }$ has finitely
many defects, then $\widetilde{\omega}(u)\subset \mathcal{S}_{\rho }$.
\end{theorem}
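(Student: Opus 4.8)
The plan is to show that a configuration $u \in \mathcal{X}_\rho$ with finitely many defects has $\widetilde{\omega}(u) \subset \mathcal{S}_\rho$ by monitoring the number of intersections between $u(t)$ and its own spatial/temporal translates, and exploiting the fact that in the DC case the time-one map of a synchronized measure is (after factoring by $u \mapsto u+n$) essentially trivial on orbits. First I would record the two cases. If $\rho$ is irrational or, more generally, if $u$ lies in the depinned phase for $\rho$, Corollary \ref{t:depinned} already gives the conclusion, so the content is in the pinned phase. So assume we are in the pinned phase for $\rho$. By definition $u$ has $k$ defects, meaning $k = \max_{m,n} \#\{\text{intersections of } u \text{ and } S_{m,n}u\} < \infty$; note $k=0$ means $u$ is itself synchronized and there is nothing to prove, so take $k \ge 1$.

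The core step is a monotonicity/stabilization argument for the trajectory $u(t) = T_t u$. For fixed integers $m,n$ with $n \ge 0$ (the DC case lets us restrict to forward time shifts), consider the function $t \mapsto N_{m,n}(t) :=$ number of intersections of $u(t)$ and $S_{m}u(t+n)$ (identifying $u$ with $u+\ell$). Because $S$ and $T$ commute and the dynamics is order-preserving, properties (I1), (I2) apply: $N_{m,n}(t)$ is non-increasing in $t$ and strictly drops whenever a non-transversal intersection occurs. Since $N_{m,n}(t) \le k$ for all $t$ and all $m,n$ (the defect count is preserved as an upper bound under the flow — here I would invoke that $T_t$ does not increase intersection numbers with translates, which follows from (I1) applied to the pair $u(t), S_{m,n}u(t)$ together with commutation), each $N_{m,n}$ eventually stabilizes to a constant value $N_{m,n}^\infty \le k$, and for all large $t$ the intersections of $u(t)$ with $S_m u(t+n)$ are transversal. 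Passing to the limit along the sequence of times witnessing $\widetilde{\omega}(u)$: any configuration $w \in \widetilde{\omega}(u)$ together with any translate $S_m w(+n)$ has at most $k$ intersections, all transversal. Now I would combine this with Theorem \ref{t:project}: $\widetilde{\omega}(u) \subset \mathcal{A}$, and any two configurations in $\mathcal{A}$ intersect only transversally, while $\pi$ is injective on $\mathcal{A}$. Hence the projection $\pi(\widetilde{\omega}(u))$ sits inside the cylinder $\mathbb{S}_1 \times \mathbb{R}$ and the shift map $h$ restricted to it has all its orbit-pairs crossing transversally finitely often.

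From here the goal is to upgrade "finitely many, transversal, self-intersections" to "zero self-intersections," i.e.\ synchronization. The mechanism is: on $\widetilde{\omega}(u)$ the flow $T_t$ acts, and because $\widetilde{\omega}(u)$ carries an $S,T$-invariant measure (it is a weak $\omega$-limit set, and by the averaging construction behind Proposition \ref{p:attractor} such sets support invariant measures), we may look at $\mathcal{I}(\mu)$ for an ergodic $S,T$-invariant $\mu$ supported in $\widetilde{\omega}(u)$. By Theorem \ref{t:lyapunov}, $t \mapsto \mathcal{I}(\mu(t))$ is constant (as $\mu$ is $T$-invariant) and hence, by (M2), $\mu$ has no non-transversal self-intersections — consistent with the above. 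I would then argue that a transversal self-intersection of a configuration with one of its shifts $S_{m,n}w$ cannot persist under the backward-and-forward dynamics inside a compact invariant set without eventually producing a non-transversal one, \emph{unless} the intersection number is already zero. Concretely: suppose $\mathcal{I}(\mu) > 0$. Then generic pairs $w, w'$ in $\mathrm{supp}(\mu)$ intersect. Take such a pair realizing the \emph{minimal} positive intersection number $N^* \ge 1$ over $\mathrm{supp}(\mu) \times \mathrm{supp}(\mu)$ (minimum attained by lower semicontinuity of intersection count on the compact support, after the transversality we established makes the count locally constant). Following the trajectory $(w(t), w'(t))$ backward in time (possible since orbits in $\widetilde{\omega}(u) \subset \omega(u)$ extend to $t \in \mathbb{R}$ by compactness), the count $N(t)$ is constant equal to $N^*$ and all intersections stay transversal; but two transversal intersection points with the same sign pattern, dragged backward, must eventually collide or escape — collision gives a non-transversal intersection, contradicting that along an invariant measure $\mathcal{I}$ is constant; escape to spatial infinity contradicts the uniform bound $K=1$ on the width of configurations in $\widetilde{\omega}(u) \subset \mathcal{X}_\rho$ (synchronized-or-not, they have the same $\rho$, hence two with the same $\rho$ cannot drift apart). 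Therefore $\mathcal{I}(\mu) = 0$, every ergodic $S,T$-invariant measure on $\widetilde{\omega}(u)$ is synchronized, so $\widetilde{\omega}(u) \subset \mathcal{S}_\rho$.

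The main obstacle, and the place where I expect the real work to be, is the last upgrade: ruling out a \emph{persistent, transversal, finite} family of self-intersections on the invariant limit set. The clean Lyapunov argument (Theorem \ref{t:lyapunov}) tells us $\mathcal{I}(\mu)$ is merely constant along an invariant measure — it does \emph{not} by itself force that constant to be zero. One genuinely needs the DC-specific structure: because $f$ is autonomous, the time-one map and spatial shift generate an essentially two-dimensional system (Theorem \ref{t:project}), so the dynamics of intersection points of $w$ with $S_{m,n}w$ is governed by a twist-map-like picture on the cylinder, and "intersection of a curve with its shift" is exactly the kind of topological crossing that Aubry–Mather/Birkhoff theory controls: a configuration that crosses a translate of itself transversally and non-trivially would project to a point whose $h$-orbit is not monotone, contradicting that $\pi(\widetilde{\omega}(u))$ — being a subset of $\mathcal{A}_\rho$ and hence, by Theorem \ref{c:ground} when depinned, or by the twist structure in general — admits a circular order preserved by $h$. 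Making this crossing-number argument rigorous in the pinned case (where $\pi(\mathcal{A}_\rho)$ need not be a single KAM-circle but a Cantor-like set plus connecting orbits) is the delicate point; I would handle it by reducing to the finitely-many-defect bound $k$, inducting on $k$, and at each step using (I2)-type strict drop along the backward trajectory to decrease the defect count until it reaches $0$, which is precisely synchronization.
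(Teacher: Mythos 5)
Your first half tracks the paper: you pass the defect bound $k$ to the limit set using (I1) and the fact (Theorem \ref{t:project}) that configurations in $\mathcal{A}\supset\widetilde{\omega}(u)$ intersect only transversally, so each $v\in\widetilde{\omega}(u)$ intersects any $S_{m,n}v$ at most $k$ times. But the decisive step --- upgrading ``finitely many transversal self-intersections'' to ``no intersections'' in the pinned phase --- is exactly where your argument has a genuine gap, and you partly acknowledge it. The ``collide or escape'' claim for transversal crossings dragged backward in time is unsupported and in general false: a single transversal crossing between two orbits on the attractor can persist for all time (this is precisely the situation tolerated by Corollary \ref{l:intersect}, where a configuration crosses a KAM configuration exactly once, forever). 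Your fallback, an induction on $k$ driven by (I2)-type strict drops along the backward trajectory, has no engine: on the attractor Theorem \ref{t:project} excludes non-transversal intersections, so $\mathcal{I}(\mu(t))$ is constant and no strict drop can ever occur. There is also a quieter measure-theoretic gap: the bound $k$ controls pairs $(v,S_{m,n}v)$ of mutual translates, but $\mathcal{I}(\mu)$ for a general ergodic $S,T$-invariant $\mu$ on $\widetilde{\omega}(u)$ samples arbitrary pairs $(w,w')$ in the support, which need not be translates of each other and for which no finiteness of the intersection number has been established; your ``minimal positive intersection number $N^*$'' may not exist. (Minor: irrational $\rho$ does not imply depinned --- that is the whole point of the Aubry transition --- though this does not affect your reduction to the pinned case.)

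The paper closes the argument quite differently, with a DC-specific structural input you did not have: Theorem \ref{t:description} (from Ref. \onlinecite{Slijepcevic14}), stating that in the DC case the attractor consists only of equilibria and depinned synchronized trajectories. Given that, any $v\in\widetilde{\omega}(u)$ is either already depinned synchronized, or an equilibrium; in the latter case one takes the $S$-invariant (automatically $T$-invariant) measure $\mu$ supported on the closure of the spatial translates $S_n v$, so that every pair in $\mathrm{supp}(\mu)$ is a limit of pairs of translates of $v$ and hence inherits the bound of at most $k$ (transversal) intersections. Birkhoff averaging then forces the per-site intersection density, i.e.\ $\mathcal{I}(\mu)$, to vanish, and $\mathcal{I}(\mu)=0$ together with Theorem \ref{t:project} yields a synchronized measure, so $v\in\mathcal{S}_\rho$. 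The DC-specific structure you invoke (autonomy giving the 2D picture) is not the relevant one --- Theorem \ref{t:project} holds in the AC case as well; what is really needed, and what your proposal lacks, is the classification of the DC attractor into equilibria and depinned synchronized orbits, which reduces the hard pinned case to equilibria where your intersection-counting does suffice.
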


The proof of that in the Appendix could be also extended to hold in the AC
case and for configurations with zero defect density. We intend to provide
details on that separately.

\section{Conclusion} \label{sec:five}

\subsection{Numerical determination of dynamical phase transitions} 

As an example of an application of tools from Hamiltonian dynamics enabled by our
2d representation of the attractor, we consider tools known as {\it Converse KAM}.
These algorithms focus on break-up of KAM-circles and more generally KAM tori. 
They have beem developed in the context of area-preserving twist maps:
by MacKay and Stark\cite{MacKay85} (based on
earlier works of Mather, Herman and others);\ Boyland and Hall\cite{Boyland87}, 
and Greene\cite{Greene79}. The first two are based on a simple
characterization: KAM-circles are "barriers for transport", which,
expressed in terms of intersections of configurations, states that no
configuration (associated to an orbit)\ can intersect a KAM-configuration
more than once. As this is by Corollary \ref{l:intersect} indeed a property of
depinned synchronized configurations, the Converse KAM algorithms should be
applicable.

For example, the Boyland-Hall algorithm can be applied to determining the dynamical phase
transition in the DC phase as follows. Assume $u$ is a stationary periodic configuration of (\ref{r:fk})
of the type $(p,q)$; that means $T_t(u)=u$ for all $t$, and $u_{k+q}=u_{k}+p$ for all $k$. We define its rotation
band $r(u)=(r^{-}(u),r^{+}(u))$, where%
\begin{eqnarray*}
r^{-}(u) &=&\min_{j,k=1,...,q}\frac{\left\lceil u_{j+k}-u_{j}\right\rceil }{k%
}, \\
r^{+}(u) &=&\max_{j,k=1,...,q}\frac{\left\lfloor u_{j+k}-u_{j}\right\rfloor 
}{k}.
\end{eqnarray*}

\noindent Here $\left\lceil x\right\rceil $ is the smallest integer larger than $x$,
and $\left\lfloor x\right\rfloor $ the largest integer smaller than $x$.
Let ${\cal R}$ be the union of all the rotation bands for all the stationary periodic configurations.

The algorithm is based on the following Theorem

\begin{theorem}
The system (\ref{r:eq}) with DC driving is in the pinned phase for a given
mean spacing $\rho \in \mathbb{R}$, if and only if $\rho \in {\cal R}$.
\end{theorem}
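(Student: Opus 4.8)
The plan is to combine Theorem~\ref{c:ground} and its converse (which together say that (\ref{r:eq}) is pinned at $\rho$ if and only if $h$ has no KAM-circle with rotation number $\rho$) with, on the one hand, the ``non-Birkhoff periodic orbit $\Rightarrow$ broken circle'' step of the Boyland--Hall converse-KAM criterion \cite{Boyland87}, and on the other hand Mather's connecting-orbit construction \cite{Mather93,Slijepcevic99} supplying the reverse implication. The common bridge is the observation that every stationary periodic configuration lies in the attractor: if $u$ is stationary of type $(p,q)$, then $T_t u=u$ for all $t$ and $S_q u=u$ in $\mathcal{X}$, so $\widetilde{\omega}(u)=\{u,S_1 u,\dots,S_{q-1}u\}$ is a finite subset of $\mathcal{A}$, and thus $u$ together with all of its spatial translates lies in $\mathcal{A}_{p/q}\subset\mathcal{A}$.

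First I would prove $\rho\in\mathcal{R}\Rightarrow$ pinned. Suppose instead that (\ref{r:eq}) is depinned at $\rho$ while $\rho\in r(u)=(r^-(u),r^+(u))$ for some stationary periodic $u$ of type $(p,q)$. By Theorem~\ref{c:ground}, $\mathcal{A}_\rho=\mathcal{S}_\rho$ is nonempty; fix $w\in\mathcal{A}_\rho$, a synchronized and hence Birkhoff configuration with rotation number $\rho$, so that $\lfloor k\rho\rfloor\le w_{j+k}-w_j\le\lceil k\rho\rceil$ for all $j,k$. Unwinding the definition of $r(u)$: the inequality $r^-(u)<\rho$ produces indices $j_0,k_0$ at which $u$ advances by strictly less than $w$ over a step of length $k_0$, while $r^+(u)>\rho$ produces $j_1,k_1$ at which $u$ advances by strictly more over a step of length $k_1$. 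The classical Aubry--Birkhoff argument behind the Boyland--Hall criterion \cite{Boyland87} then shows that, for a suitable pair of integers $m,n$, the sequence $(S_{m,n}u)_j-w_j$ changes sign on two disjoint index intervals, so that $S_{m,n}u$ intersects $w$ at least twice. But $S_{m,n}u$ is again a stationary equilibrium, hence lies in $\mathcal{A}$, contradicting Corollary~\ref{l:intersect}. Therefore the system is pinned.

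For the converse, pinned $\Rightarrow\rho\in\mathcal{R}$, I would invoke the converse of Theorem~\ref{c:ground}: $h$ has no KAM-circle with rotation number $\rho$, so (for irrational $\rho$) the image $\pi(\mathcal{S}_\rho)$ is an Aubry--Mather Cantor set whose $\pi_0$-projection omits a nonempty open interval $(a,b)\subset\mathbb{S}^1$. Since the DC dynamics is a gradient flow it has no nonconstant periodic solutions, so a synchronized solution with zero drift is a stationary equilibrium; as a pinned phase admits no uniformly sliding solution with mean spacing $\rho$, all of $\mathcal{S}_\rho$, in particular the two boundary orbits of the gap, consists of Birkhoff equilibria. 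Choosing a rational $p/q$ sufficiently close to $\rho$ and running Mather's connecting-orbit / Peierls--Nabarro-barrier construction for area-preserving twist maps \cite{Mather93,Slijepcevic99} between the corresponding minimal $(p,q)$-periodic equilibria yields a non-Birkhoff stationary periodic configuration $u$ of type $(p,q)$; since this orbit bridges a gap straddling $\rho$, one checks that $r^-(u)<\rho<r^+(u)$, i.e.\ $\rho\in\mathcal{R}$. For rational $\rho$ one imposes the same technical restrictions as for Corollary~\ref{c:unique}.

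The main obstacle is this last step: manufacturing the non-Birkhoff periodic equilibrium whose rotation band engulfs $\rho$. It genuinely uses the variational (Aubry--Mather) structure of the equilibria rather than the bare intersection-counting of Theorem~\ref{t:lyapunov}, and the connecting-orbit machinery must be run with enough control on rotation numbers to place $\rho$ strictly inside the resulting band. The forward implication, by contrast, is essentially a repackaging of the classical ``invariant circles are transport barriers'' argument together with Corollary~\ref{l:intersect}; its only delicate point is the extraction of the doubly-intersecting translate $S_{m,n}u$.
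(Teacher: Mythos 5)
The first thing to note is that the paper does not actually prove this theorem: it records only that the proof ``is an application of Corollary \ref{l:intersect} and techniques from Ref.~\onlinecite{Boyland87}'' and defers all details to the forthcoming Ref.~\onlinecite{Nincevic14}. Your forward implication ($\rho\in\mathcal{R}\Rightarrow$ pinned) follows exactly this indicated route, and its skeleton is sound: a stationary $(p,q)$-configuration carries a finite $S,T$-invariant measure, hence lies in $\mathcal{A}$ by Proposition \ref{p:attractor}; in the depinned phase $\mathcal{A}_{\rho}=\mathcal{S}_{\rho}$ is a totally ordered family of Birkhoff configurations; and a double crossing of some member of that family by a translate of $u$ contradicts Corollary \ref{l:intersect}. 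The one step you leave to ``the classical Aubry--Birkhoff argument'' --- extracting the doubly-intersecting translate $S_{m,n}u$ --- is where the actual work sits (one must exploit the $(p,q)$-periodicity of $u$ together with the fact that the depinned family covers all of $\mathbb{S}^1$, or alternatively produce a tangency and contradict Theorem \ref{t:project} as in the proof of Lemma \ref{l:insert}); you flag it but do not carry it out, which is acceptable as an outline since this is precisely what the paper itself postpones.

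The genuine gap is in the converse direction (pinned $\Rightarrow\rho\in\mathcal{R}$). First, you only treat irrational $\rho$ (leaning on the converse of Theorem \ref{c:ground}, whose proof the paper also omits, and on Denjoy theory), and for rational $\rho$ you import ``the technical restrictions of Corollary \ref{c:unique}'' --- but the theorem is asserted for every $\rho\in\mathbb{R}$ without restriction, so this changes the statement. Second, and more seriously, the machinery you invoke does not apply as stated: for $F_{DC}\neq0$ the equilibria are critical points of the tilted energy, whose generating function satisfies $h(x+1,x'+1)=h(x,x')-F$; the associated twist map is area-preserving but not exact, and Mather's connecting-orbit/minimal-measure constructions \cite{Mather93,Slijepcevic99} presuppose exactness (periodicity of the action), failing which global minimizers do not even exist. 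Moreover, a connecting orbit is heteroclinic, not periodic: what the criterion requires is a badly ordered \emph{periodic} equilibrium of type $(p,q)$, with $p/q$ near $\rho$ and with rotation band provably straddling $\rho$, which calls for a different variational object (a min--max periodic critical point in a gap of the pinned set, with quantitative control of its rotation band) adapted to the tilted, non-exact setting. Producing that object is exactly the technical heart that the paper defers to Ref.~\onlinecite{Nincevic14}, and your proposal does not supply it; as written, the converse direction remains unproved.
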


The proof is an application of Corollary \ref{l:intersect} and techniques from 
Ref. \onlinecite{Boyland87} and will be reported in Ref. \onlinecite{Nincevic14}.
As we can numerically find stationary periodic configurations 
(by finding extremal points of a "tilted" energy functional $H$
restricted to the periodic configurations) and calculate
their rotation bands, the phase diagram can be calculated with arbitrary precision.

\subsection{Perspectives}

One can question whether the results on infinite chains are only of
theoretical interest, as all systems in nature are finite. We argue that
analyzing such \textit{extended} dynamical systems is the right tool to
obtain bounds (for example on relaxation times) independent of the system
size. For example in Ref. \onlinecite{Gallay14} we used that approach to find bounds on
relaxation times of the viscous fluid turbulence independent of the
reservoir size, and in Refs. \onlinecite{Gallay:01}, \onlinecite{Gallay:12}, to many other
systems. Our results imply that for FK\ models, there exist bounds on
convergence times to synchronized solutions independent of the chain size.
It is an important next step to find them.

The description of the dynamics of driven FK\ models here is not complete,
as we described only the "non-zero space-time probability" asymptotics. A
complete asymptotics description is most likely a combination of the 2D,
typically synchronized, dynamics we described, and coarsening as described
by Eckmann and Rougemont\cite{Eckmann98} for a similar system. That means
that different parts of the chain converge to different dynamical ground
states, and then they either diffusively or in a sequence of discrete
coarsening "jumps" coallesce on larger and larger space and time scales. A
more precise description of this dynamics would be nice.

The results here apply also to more general 1D chains whose energy is given
by a function $V(u_{j},u_{j+1})$ satisfying the twist condition $\partial
_{xy}V(x,y)\leq -\delta <0$, as long as the interaction is only between the
nearest neighbours. For longer range interactions, even if the dynamics is
cooperative (i.e. the off-diagonal elements of the linearized equation are
positive), the intersection-counting tools do not apply\cite{Slijepcevic14}. 
The approach is, however, applicable to the
ratchet dynamics of FK chains (no driving; but either the site potential
or the interaction potential change periodically in time), where exact
results are scarce\cite{Floria05}. Furthermore, it applies also to the second order dynamics as
long as damping is strong enough, and to analogous continuous space
systems\cite{Slijepcevic14a}.

Finally, we propose focusing on the attractor as defined here, or
equivalently on description of the space-time invariant measures, when
studying any extended dynamical system (by that we mean lattice systems of
infinitely many ODE\cite{Slijepcevic13}; or PDE on unbounded domains\cite{Gallay:01,Gallay:12}). This should result
with, for example, better understanding of existence and frequency of
occurrence of the space-time chaos, as our understanding of that phenomenon is quite limited\cite{Bunimovich88,Mielke09}.

\appendix*

\section{Proofs of Theorems}

In all the proofs we use the fact that $\mathcal{X}_{\rho }=\widetilde{\mathcal{X}}_{\rho }/\sim $
is compact in the (always implicitly assumed) induced product topology, which follows from
the Tychonoff theorem. Here $\widetilde{\mathcal{X}}$, $\widetilde{\mathcal{X}}_{\rho }$ are the sets of 
configurations of bounded width as defined in Section III.A, $\sim $ is the relation of equivalence, $u\sim
u+n$ for any integer $n$. As in most of the paper, we abuse the notation and denote
by $u$ both configurations in $\mathcal{X}$ and their representatives in $\widetilde{\mathcal{X}}$.
All the measures are Borel probability measures on 
$\mathcal{X}_{\rho }$ or $\mathcal{X}$. By the Alaoglu theorem, the set of
probability measures on $\mathcal{X}_{\rho }$ is compact in the weak* topology.
We always assume without loss of generality that an ($T$- or $S$-invariant) 
measure on $\mathcal{X}$ is actually supported on $\mathcal{X}_{\rho }$
for some $\rho \in \mathbb{R}$. We can do that by the Ergodic Decomposition
Theorem\cite{Katok95}, as $\mathcal{X}_{\rho }$ is $S$- and $T$-invariant. All the
proofs are done by considering the invariance
with respect to the time-one map $T=T_{1}$. They could be
easily adjusted to the DC case with invariance being considered with
respect to the semiflow $T_{t}$, $t\geq 0$.

\vspace{1ex}
Prior to the proof of Proposition \ref{p:attractor}, we introduce characterization 
of $\widetilde{\omega}$-limit sets. Denote by $\mathbf{1}_{\mathcal{V}}$ 
the characteristic function of a set $\mathcal{V}$.

\vspace{1ex}

\begin{lemma} \label{l:charomega} A configuration $v \in \widetilde{\omega}(u)$ if
only if for each open neighbourhood $\mathcal{V}$ of $v$, there exists $\delta >0$ and a sequence 
$N_k \rightarrow \infty$ such that 
\[
\frac{1}{(N_k+1)(2N_{k}+1)}\sum_{n=0}^{N_{k}}%
\sum_{m=-N_{k}}^{N_{k}} \mathbf{1}_{\mathcal{V}}({T_{n}S_{m}v}) \geq \delta.
\]
\end{lemma}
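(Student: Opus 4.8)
Fix a metric $\varrho$ generating the product topology on the compact space $\mathcal{X}_{\rho}$, write $B(v,\varepsilon)$ for the open $\varepsilon$-ball around $v$, and for a Borel set $\mathcal{V}$ put
\[
\overline{d}_{u}(\mathcal{V})=\limsup_{N\to\infty}\frac{1}{(N+1)(2N+1)}\sum_{n=0}^{N}\sum_{m=-N}^{N}\mathbf{1}_{\mathcal{V}}(T_{n}S_{m}u).
\]
(Here the empirical averages are read along the two-parameter orbit of $u$.) The condition in the Lemma -- existence of $\delta>0$ and $N_{k}\to\infty$ with the displayed average at least $\delta$ -- is exactly $\overline{d}_{u}(\mathcal{V})>0$; and since every open neighbourhood of $v$ contains some $B(v,\varepsilon)$ and $\overline{d}_{u}$ is monotone in its argument, the Lemma's condition over all neighbourhoods $\mathcal{V}$ of $v$ is equivalent to $\overline{d}_{u}(B(v,\varepsilon))>0$ for every $\varepsilon>0$. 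Let $D$ be the set of $v$ with this last property. I would prove the Lemma by showing $D=\widetilde{\omega}(u)$, i.e. that $D$ is the smallest closed set having the defining density-$1$ property of $\widetilde{\omega}(u)$.

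First I would check that $D$ is closed: if $v_{k}\to v$, $v_{k}\in D$, and $\varepsilon>0$, choose $k$ with $\varrho(v_{k},v)<\varepsilon/2$, so $B(v_{k},\varepsilon/2)\subseteq B(v,\varepsilon)$ and hence $\overline{d}_{u}(B(v,\varepsilon))\ge\overline{d}_{u}(B(v_{k},\varepsilon/2))>0$. Next, that $D$ itself has the defining property: given an open $U\supseteq D$, the complement $K=\mathcal{X}_{\rho}\setminus U$ is compact and disjoint from $D$, so each $w\in K$ admits $\varepsilon_{w}>0$ with $\overline{d}_{u}(B(w,\varepsilon_{w}))=0$; extracting a finite subcover $B(w_{1},\varepsilon_{w_{1}}),\dots,B(w_{J},\varepsilon_{w_{J}})$ of $K$ and using $\mathbf{1}_{A\cup B}\le\mathbf{1}_{A}+\mathbf{1}_{B}$ together with finite subadditivity of $\limsup$ gives $\overline{d}_{u}(K)\le\sum_{j=1}^{J}\overline{d}_{u}(B(w_{j},\varepsilon_{w_{j}}))=0$. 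Thus the fraction of pairs $(n,m)$ in the box with $T_{n}S_{m}u\in K$ tends to $0$, i.e. the fraction with $T_{n}S_{m}u\in U$ tends to $1$.

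For minimality, let $C$ be any closed set such that for every open $U\supseteq C$ the fraction of $(n,m)$ in the box with $T_{n}S_{m}u\in U$ tends to $1$; I would show $D\subseteq C$. If not, pick $v\in D\setminus C$; since $C$ is closed there is $\varepsilon>0$ with $\overline{B(v,\varepsilon)}\cap C=\emptyset$, so $U=\mathcal{X}_{\rho}\setminus\overline{B(v,\varepsilon)}$ is an open set containing $C$, whence the fraction of $(n,m)$ with $T_{n}S_{m}u\in\overline{B(v,\varepsilon)}$ tends to $0$, forcing $\overline{d}_{u}(B(v,\varepsilon))=0$ and contradicting $v\in D$. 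Together with the previous paragraph this shows $D$ is a closed set with the density-$1$ property contained in every such set, so $D=\widetilde{\omega}(u)$; this also re-establishes that $\widetilde{\omega}(u)$ is well defined.

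I expect the only non-routine point to be the second paragraph: upgrading the pointwise vanishing of $\overline{d}_{u}$ near each point of the compact set $K$ to global vanishing on $K$, which needs compactness (finite subcover) combined with the finite subadditivity of the upper density $\overline{d}_{u}$ -- subadditivity failing for countable unions, so the compactness is genuinely essential here. Everything else is elementary point-set topology together with monotonicity of $\overline{d}_{u}$.
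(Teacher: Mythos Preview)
Your argument is correct and complete. The paper itself records only ``Straightforward, by contradiction'' for this lemma, so there is essentially nothing to compare against; your proof supplies precisely the details the paper omits, and the contradiction steps you use (both in the minimality direction and implicitly in showing $\overline{d}_u(K)=0$ forces the density-$1$ property) are entirely in the spirit of the paper's one-line hint. The one place where something substantive happens is exactly where you flag it: passing from pointwise vanishing of the upper density near each $w\in K$ to vanishing on all of $K$ genuinely requires compactness together with finite subadditivity of $\limsup$, and you handle this correctly. One small remark: the displayed formula in the statement has $T_nS_m v$ where $T_nS_m u$ is meant (the orbit of $u$, not of $v$); you silently and correctly read it that way.
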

\begin{proof} Straightforward, by contradiction. \end{proof}

\vspace{1ex}

\begin{proof}[Proof of Proposition \protect\ref{p:attractor}]
We first show that each $v\in \widetilde{\omega}(u)$ is in the support of some $%
S,T$-invariant measure. Choose an open neighborhood $\mathcal{V}$ of $v\in \widetilde{\omega}(u)$, 
and find $\delta >0$ and a sequence 
$N_k \rightarrow \infty$ as in Lemma \ref{l:charomega}. 
Let $\mu_k$ be a sequence of measures on $\mathcal{X}_{\rho }$
defined as
\[
\mu_k=\frac{1}{(N_k+1)(2N_{k}+1)}\sum_{n=0}^{N_{k}}%
\sum_{m=-N_{k}}^{N_{k}}\delta _{T_{n}S_{m}u},
\]
where $\delta _{u}$ is the Dirac measure supported on $u$. It is easy to
check that the limit $\nu$ of each convergent subsequence of $\mu_k$ (which exists due to
compactness) is a $S,T$-invariant measure on $\mathcal{X}_{\rho }$.

Now choose a sequence of decreasing open neighborhoods $\mathcal{V}_n$ of a fixed
$v\in \widetilde{\omega}(u)$ such that $\cap_{n \in \mathbb{N}} \mathcal{V}_n = \{ v \}$
and construct a sequence of associated $S,T$-invariant measures $\nu_n$ as above.
Then $v$ is in the support of the $S,T$-invariant measure 
$\sum_{n \in \mathbb{N}} (1/2^n) \nu_n$.

The converse follows from the fact that the union of supports of all the 
$S,T$-invariant
measures is closed (Ref. \onlinecite{Slijepcevic14a}, Lemma 2.1) and a standard ergodic-theoretical
argument based on the Birkhoff Ergodic Theorem and the Ergodic Decomposition Theorem.
As logically not required for the results that follow, we omit the details.

The set 
$\mathcal{A}_{\rho }$ is not empty, as $\mathcal{X}_{\rho}$ is compact, $S,T$%
-invariant, thus it supports a $S,T$-invariant measure (Ref. \onlinecite{Slijepcevic14},
Lemma 7.2).
\end{proof}

\vspace{1ex}

\begin{proof}[Outline of proof of Theorem \protect\ref{t:lyapunov}]
A detailed proof is given in Ref. \onlinecite{Slijepcevic14}, Propositions 5 and 6; we
outline the key argument. Consider the function
\[
\mathcal{I}(\mu(t))=\sum_{n \in \mathbb{Z}} \int \mathcal{I}(u(t),v(t)+n)d\mu (u(0))d\mu (v(0)),
\]

\noindent counting intersections of $u,v$ with respect to the time evolution of a
measure $\mu$. An intersection of $u(t)$, $v(t)$ can be represented by a continuous curve $\gamma (t)$,
which is defined as the point where the graph of $i \mapsto u_i(t)-v_i(t)$ crosses the $x$-axis.
As by property (I1), the function $\mathcal{I}(u(t),v(t)+n)$ is 
non-increasing (except in the cases when $\gamma (t)$ crosses the boundaries of $[0,1]$ which can be 
shown to by $S$-invariance of $\mu$ cancel out), $\mathcal{I}(\mu(t))$ is non-increasing.
Similarly, a continuity argument and (I2) imply (M2).
\end{proof}

\vspace{1ex}
\begin{proof}[Proof of Theorem \protect\ref{t:project}]
By Proposition \ref{p:attractor}, we can assume $u$, $v$ are in the support
of some $S,T$-invariant measure $\mu $ (if not the same measure, we take their convex
combination $\mu =\mu _{1}/2+\mu _{2}/2$ and obtain again a $S,T$-invariant
measure). If $u$, $v$ intersect non-transversally, by Theorem \ref{t:lyapunov}%
, $\mathcal{I}(\mu (t))$ is strictly decreasing at $t=0$. But $\mu$ 
is $T$-invariant, thus $\mathcal{I}(\mu (t))$ must be constant,
which is a contradiction. Now, if $\pi (u)=\pi (v)$ for some $u,v\in 
\mathcal{A}$, by definition they intersect non-transversaly, which is
impossible.
\end{proof}

\vspace{1ex}
Prior to the proof of Theorem \ref{c:ground}, we need an intermediate result.
Assume (\ref{r:eq}) is in the depinned phase for some $\rho \in \mathbb{R}$. 
Denote by $\mathcal{S}^*_{\rho}$ the support of the synchronized measure $\mu$
from Definition \ref{d:two}.

\begin{lemma} \label{l:insert}
If $u \in \mathcal{S}^*_{\rho}$, depinned, then
no configuration in $\mathcal{A}$ can intersect $u$ more than once.
\end{lemma}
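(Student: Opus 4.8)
The plan is to argue by contradiction, exploiting the Lyapunov function $\mathcal{I}$ together with the depinned hypothesis, which supplies a synchronized measure $\mu$ with $\pi_0(\mathrm{supp}(\mu)) = \mathbb{S}^1$. Suppose $u \in \mathcal{S}^*_\rho$ and some $w \in \mathcal{A}$ intersects $u$ at least twice. First I would note that since no two configurations in $\mathcal{S}^*_\rho$ intersect (it is the support of a synchronized measure), $w$ itself cannot lie in $\mathcal{S}^*_\rho$, nor can it be a spatial translate $S_m u$ of $u$; but $w$ does live in the support of some $S,T$-invariant measure $\mu_w$ by Proposition \ref{p:attractor}. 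The key geometric input is that, because $\pi_0$ maps $\mathrm{supp}(\mu)$ onto the whole circle, the translates $\{S_m u : m \in \mathbb{Z}\}$ together with their time evolutions fill out enough of the cylinder that a configuration crossing $u$ twice must cross some translate $S_m u$ non-transversally at some time — or else it is ``trapped'' in a way that forces a non-transversal crossing upon evolution.

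Concretely, the second step is to form the $S,T$-invariant measure $\nu = \tfrac12\mu + \tfrac12\mu_w$ and consider $\mathcal{I}(\nu(t))$, which is constant in $t$ by $T$-invariance (as in the proof of Theorem \ref{t:project}). The self-intersection count $\mathcal{I}(\nu)$ decomposes into the $\mu$--$\mu$ part (zero, since $\mu$ is synchronized), the $\mu_w$--$\mu_w$ part, and the cross term $\sum_n \int \mathcal{I}(u'+n, w')\,d\mu(u')\,d\mu_w(w')$. The assumption that $w$ crosses $u$ at least twice, combined with $S$-invariance of both measures and the fact that $\pi_0(\mathrm{supp}(\mu))$ is all of $\mathbb{S}^1$, should force this cross term to be strictly positive — in fact to ``want to decrease,'' because a generic pair from the two supports will, under the dynamics, develop a non-transversal tangency. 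The third step is then to produce an actual non-transversal intersection at some finite time: track the two intersection curves $\gamma_1(t), \gamma_2(t)$ of $u(t)$ and $w(t)$ (well-defined and continuous as in the outline of Theorem \ref{t:lyapunov}); by (I1)--(I2) the intersection number of $u(t),w(t)$ is non-increasing, so if it never drops then no non-transversal crossing ever occurs for this pair — but then sliding $u$ through its translates (using depinnedness to move $u_0$ around the whole circle) produces a nearby translate $S_m u$ that $w$ crosses non-transversally, and since $S_m u \in \mathrm{supp}(\mu) \subset \mathrm{supp}(\nu)$, this is a non-transversal intersection of two configurations in $\mathrm{supp}(\nu(t_0))$, so by (M2) $\mathcal{I}(\nu(t))$ strictly decreases at $t_0$, contradicting its constancy.

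The main obstacle I anticipate is the third step: turning ``$w$ crosses $u$ twice'' plus ``$u_0$ ranges over all of $\mathbb{S}^1$'' into a genuine non-transversal crossing of $w$ with \emph{some} element of $\mathrm{supp}(\mu)$. The clean way to see it is via the $2$D representation of Theorem \ref{t:project}: $\pi$ is injective on $\mathcal{A}$, the images $\pi(\mathrm{supp}(\mu))$ of the depinned synchronized measure form (the closure of) a graph over the circle coordinate — i.e. a KAM-circle, once Theorem \ref{c:ground} is in hand, but here one should argue directly — and a configuration $w$ whose graph crosses $u$'s graph twice has $\pi(w)$ lying on both sides of the arc of translates of $u$, forcing $\pi(w)$ to coincide in first coordinate with $\pi(S_m u)$ for some $m$ in a way that, pulled back, is a non-transversal tangency. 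One must be careful that this is a statement about a single time slice and that the measures' supports are closed and $S$-invariant so that ``some translate'' genuinely lies in $\mathrm{supp}(\mu)$; the continuity/compactness of $\mathcal{X}_\rho$ and the fact that $\mathcal{I}(u',\cdot)$ jumps only at non-transversal crossings are what make the limiting argument go through. This lemma is essentially the special case of Corollary \ref{l:intersect} where the privileged configuration is already known to sit in the distinguished synchronized support, and it will serve as the base case for the proof of Theorem \ref{c:ground}.
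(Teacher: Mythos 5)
Your overall strategy is heavier than needed and, at the decisive point, it has a gap. The Lyapunov scaffolding (forming $\nu=\tfrac12\mu+\tfrac12\mu_w$, constancy of $\mathcal{I}(\nu(t))$, invoking (M2)) buys you nothing beyond what Theorem \ref{t:project} already encodes: that theorem already says no two configurations of $\mathcal{A}$ can intersect non-transversally, so the whole lemma reduces to producing, at a \emph{single time slice}, a non-transversal intersection between $w$ and some element of $\mathcal{S}^*_{\rho}$ from the assumption that $w$ crosses $u$ twice. That production step is exactly what you flag as ``the main obstacle,'' and your proposed mechanism does not work: the spatial translates $S_m u$ form a countable family, and there is no reason any of them is tangent to $w$ --- generically $w$ crosses every translate transversally, and neither (I1)--(I2) along the time evolution nor positivity of the cross term in $\mathcal{I}(\nu)$ forces a tangency to appear. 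Your fallback sketch via the 2D projection (``$\pi(w)$ lies on both sides of the arc of translates'') is a heuristic about a countable arc again, and is not an argument.

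What is actually needed, and what the paper does, is to use the depinned hypothesis to get a \emph{continuum}: since $\pi_0$ restricted to $\mathcal{S}^*_{\rho}$ is injective (synchronization) and onto $\mathbb{S}^1$ (depinning), the lift of $\mathcal{S}^*_{\rho}$ is a continuous curve $\gamma:\mathbb{R}\to\widetilde{\mathcal{A}}$ with $\gamma(0)=u$, increasing in every coordinate. If $w$ crosses $u$ between sites $j,j+1$ and $k,k+1$, one slides along this curve and takes the largest parameter $s$ for which $\gamma(s)$ still intersects $w$ in the finite window $j,\dots,k+1$ (finite because the family increases coordinatewise, so eventually it clears $w$ there; the supremum is attained by closedness). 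At that extremal $s$ the intersection cannot be transversal, since a transversal crossing would persist for slightly larger $s$; hence $\gamma(s)\in\mathcal{A}$ and $w\in\mathcal{A}$ intersect non-transversally, contradicting Theorem \ref{t:project} directly. The monotonicity of the family and the extremal-parameter argument are the missing ingredients in your proposal; once you have them, the measure-theoretic detour through $\nu$ and (M2) can be discarded entirely.
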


\begin{proof}
By definition, $\pi_0 : \mathcal{S}^*_{\rho} \rightarrow \mathbb{S}^1$
is bijective (injective due to synchronization; onto as in the depinned phase).
Thus its lift $\widetilde{\mathcal{S}}^*_{\rho}$ (i.e. not considered as a quotient space) 
is an image of a continuous curve $\gamma :\mathbb{R}\rightarrow \widetilde{\mathcal{A}}$, $\gamma (0)=u$, increasing
in each coordinate. Assume some $v\in \widetilde{\mathcal{A}}$ intersects $u$ twice, say
between $j,j+1$ and $k,k+1$. Without loss of generality let $v_j \leq u_j$, $%
v_{j+1}>u_{j+1},...,v_{k-1}>u_{k-1},v_{k}\geq u_{k}$, $v_{k+1}<u_{k+1}$. 
Let $s>0$ be the largest $s$ such that $\gamma (s)$ and $v$ intersect between $j,...,k+1$. By
continuity of $\gamma $, $\gamma (s)$ and $v$ intersect non-transversally,
which is in contradiction to Theorem \ref{t:project}.
\end{proof}

\vspace{1ex}

\begin{proof}[Proof of Theorem \protect\ref{c:ground}]
Let $\rho \in \mathbb{R}$. We first show that $\mathcal{S}_{\rho }$ is not empty.
Choose a synchronized orbit $u$ in $\mathcal{X}_{\rho }$ (which exists by
Theorem \ref{t:synchronized}). Let $\mathcal{B}$ be the smallest, closed, $%
S,T$-invariant set containing $u$. As (\ref{r:bWidth}) holds with $K=1$, we see that
$\mathcal{B}\subset \mathcal{X}_{\rho }$. Thus $\mathcal{B}$ is compact. 
By definition and continuity, if two configurations
in $\mathcal{B}$ intersect, they must intersect non-transversally. 
Compactness of $\mathcal{B}$ implies that there exists a $S,T$-invariant measure $\mu $
supported on $\mathcal{B}$. As $\mathcal{B}$ contains no transversal
intersections, $\mu $ by Theorem \ref{t:project} contains no intersections at all.
It is then by definition synchronized, thus $\mathcal{S}_{\rho }$ is not empty.

To show that $\mathcal{A}_{\rho }=\mathcal{S}_{\rho }$, it suffices to show that
$\mathcal{A}_{\rho }=\mathcal{S}^*_{\rho}$. Here $\mathcal{S}^*_{\rho}=\textnormal{supp}(\mu)$, $\mu$ are
as defined prior to Lemma \ref{l:insert}. Choose any $v\in \mathcal{A}_{\rho }$. By Proposition \ref{p:attractor}, $v$ is in the
support of a $S,T$-invariant measure $\zeta $. By definition of $\mu$, we can find $u$
in the support of $\mu $ such that
$u_{0}=v_{0}$. If $u\not=v$, by Lemma \ref{l:intersect} $u,v$ intersect
exactly once. Now $\mu \times \zeta $ is a probability
measure on $\mathcal{A}_{\rho } \times \mathcal{A}$,
invariant for $S\times S$. If $U,V$ are small enough neighborhoods of $u,v$ in 
$\mathcal{A}_{\rho }$ such that any two configurations in $U,V$ also
intersect at site $0$, by Poincar\'{e} recurrence applied to $S\times S$
on $U\times V$, we can find $u^{\prime }\in U$, $v^{\prime }\in V$ such that 
$u^{\prime },v^{\prime }$ intersect infinitely many times. As by (\ref%
{r:bWidth}), there exists $K$ such that for all $j\in \mathbb{Z}$, $%
|u_{j}-v_{j}|\leq K$ (as they have the same mean spacing), we can find their
representatives in $\widetilde{\mathcal{A}}_{\rho }$ which intersect infinitely many
times, which is in contradiction to Lemma \ref{l:insert}.

Now as $\mathcal{A}_{\rho }=\mathcal{S}_{\rho}=\mathcal{S}^*_{\rho}$, by definition of the depinned phase and $\mathcal{S}^*_{\rho}$
it must project to a KAM-circle.
\end{proof}

\vspace{1ex}

\begin{proof}[Proof of Corollary \protect\ref{c:unique}]
Uniqueness of the $S,T$-invariant measure for irrational $\rho$
follows from the Denjoy theory (in that case there is an unique $S$-invariant measure, 
Ref. \onlinecite{Katok95}, Section 12.7).
\end{proof}

\vspace{1ex}

\begin{proof}[Proof of Corollary \protect\ref{l:intersect}]
In the proof of Theorem \ref{c:ground} above we showed that
in the depinned phase, $\mathcal{A}_{\rho }=\mathcal{S}^*_{\rho}$.
Thus the claim follows from Lemma \ref{l:insert}.
\end{proof}

\vspace{1ex}

In the proof of Theorem \ref{t:density} we will require the following,
which was proved in Ref. \onlinecite{Slijepcevic14}, Theorem 1.4.

\begin{theorem}
\label{t:description}In the DC case, the attractor $\mathcal{A}$ consists of
equilibria and depinned synchronized trajectories.
\end{theorem}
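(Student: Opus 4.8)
The plan is to decompose $\mathcal{A}=\bigcup_{\rho}\mathcal{A}_{\rho}$ and prove the dichotomy separately for each mean spacing $\rho$, reducing throughout to ergodic invariant measures. By Proposition \ref{p:attractor} and the Ergodic Decomposition Theorem it suffices to show that every ergodic $S,T$-invariant measure $\mu$ on $\mathcal{X}_{\rho}$ is supported either on equilibria or on depinned synchronized trajectories. If the phase is depinned for $\rho$, this is immediate from Theorem \ref{c:ground}, which gives $\mathcal{A}_{\rho}=\mathcal{S}_{\rho}$. Hence the entire content is the pinned case, where I must show that $\mathrm{supp}(\mu)$ consists of equilibria.

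The central tool is a power-balance identity. Writing $g(u)=\dot u_0=W'(u_1-u_0)-W'(u_0-u_{-1})+V'(u_0)+F$ for the instantaneous velocity of the zeroth particle, set $v(\mu)=\int g\,d\mu$ (mean sliding speed) and $D(\mu)=\int g^2\,d\mu\geq 0$ (mean dissipation). Applying infinitesimal $T$-invariance $\int \mathcal{L}\Phi\,d\mu=0$ (with $\mathcal{L}$ the generator of the flow) to the local energy density $\Phi(u)=W(u_1-u_0)-V(u_0)$, and using $S$-invariance to telescope the coupling terms, I expect to obtain
\[
D(\mu)=F\,v(\mu).
\]
Since the DC dynamics is the gradient flow of the tilted energy $H-F\sum_j u_j$, this expresses that mean dissipation equals injected power. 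Its immediate consequence is the key implication $v(\mu)=0\Rightarrow D(\mu)=0\Rightarrow g=0$ $\mu$-a.e., whence by $S$-invariance $\dot u_j=0$ for all $j$ a.e.; as the set of equilibria is closed, $\mathrm{supp}(\mu)$ then consists of equilibria.

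It therefore remains to show $v(\mu)=0$ in the pinned phase. First I produce an equilibrium at spacing $\rho$: take the synchronized solution $\hat u$ of Theorem \ref{t:synchronized} and its synchronized measure $\hat\mu$. If $v(\hat\mu)\neq 0$, then for $\hat\mu$-a.e.\ $w$ the trajectory satisfies $w_0(t)\to\pm\infty$, so by continuity $w_0(t)\bmod 1$ sweeps all of $\mathbb{S}^1$ and $\pi_0(\mathrm{supp}\,\hat\mu)=\mathbb{S}^1$; this exhibits a synchronized onto measure, contradicting the pinned assumption. Hence $v(\hat\mu)=0$, and by the power balance $\hat u$ is an equilibrium. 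Now let $\mu$ be arbitrary and $u\in\mathrm{supp}(\mu)$. Since $u,\hat u\in\mathcal{X}_{\rho}$ have the same mean spacing and bounded width, $u_j-\hat u_j$ is bounded, so there are integers $m\leq M$ with $\hat u_j+m\leq u_j\leq \hat u_j+M$ for all $j$. The translates $\hat u+m,\hat u+M$ are again equilibria, and the order-preserving property propagates the sandwich: $\hat u+m\leq u(t)\leq \hat u+M$ for all $t\geq 0$. Thus $u_0(t)$ stays bounded, forcing $v(\mu)=\lim_{t\to\infty}u_0(t)/t=0$, and the previous paragraph finishes the pinned case.

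The main obstacle is the power-balance identity: justifying infinitesimal invariance for the (a priori only locally integrable) energy density on the compact quotient $\mathcal{X}_{\rho}$, carrying out the telescoping of the coupling terms under $S$-invariance without boundary contributions, and confirming that $v(\mu)=\int g\,d\mu$ really is the asymptotic sliding speed $\lim_{t\to\infty}u_0(t)/t$ (Birkhoff applied to the time-one map). A secondary delicate point is the linchpin that a synchronized measure with nonzero mean velocity projects onto the entire circle, which is what ties the purely dynamical quantity $v(\mu)$ to the measure-theoretic Definition \ref{d:two} of the depinned phase.
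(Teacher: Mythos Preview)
The paper does not actually prove Theorem~\ref{t:description}; it is quoted from Ref.~\onlinecite{Slijepcevic14}, Theorem~1.4, so there is no in-paper argument to compare against. Your route is logically independent of that reference and, importantly, non-circular within this paper: Theorem~\ref{c:ground}, which you invoke in the depinned case, rests only on Theorems~\ref{t:synchronized} and~\ref{t:project}, Lemma~\ref{l:insert} and Proposition~\ref{p:attractor}, none of which require Theorem~\ref{t:description}.

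Your pinned-case argument via the power balance $D(\mu)=F\,v(\mu)$ is correct and rather elegant. The telescoping works exactly as you say: after substituting $V'(u_0)=\dot u_0-W'(u_1-u_0)+W'(u_0-u_{-1})-F$ into $\mathcal L\Phi$, the two coupling terms $W'(u_1-u_0)\dot u_1$ and $W'(u_0-u_{-1})\dot u_0$ have the same $\mu$-integral by $S$-invariance. The subsequent chain ($v(\mu)=0\Rightarrow D(\mu)=0\Rightarrow$ equilibria) and the sandwich between integer translates of a synchronized equilibrium are standard order-preserving arguments. Two points deserve care. First, for $\pi_0(\mathrm{supp}\,\hat\mu)=\mathbb S^1$ when $v(\hat\mu)\neq 0$ you need $w(t)\in\mathrm{supp}(\hat\mu)$ for \emph{all} $t\geq 0$, i.e.\ invariance under the full semiflow $T_t$, not just the time-one map; in the DC (autonomous) case this is available by averaging $\int_0^1 T_s^*\hat\mu\,ds$, and the resulting measure is still synchronized. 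Second, the infinitesimal invariance $\int\mathcal L\Phi\,d\mu=0$ needs $\Phi$ and $\mathcal L\Phi$ to be $\mu$-integrable; on $\mathcal X_\rho$ the increment $u_1-u_0$ is not uniformly bounded a priori, so you should argue integrability for the specific $\mu$ at hand (e.g.\ via uniform bounds on the attractor, or by first restricting to $\{|u_1-u_0|\leq R\}$ and letting $R\to\infty$). Modulo these technicalities your strategy is sound and gives a pleasantly direct, energy-based alternative to the intersection-counting machinery presumably used in Ref.~\onlinecite{Slijepcevic14}.
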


Here equilibria are configurations $u$ for which $\partial
u/\partial t=0$. 

\vspace{1ex}

\begin{proof}[Proof of Theorem \protect\ref{t:density}]
If $u$ has at most $k$ defects, $u$ and any translate $S_{m,n}u$
intersect at most $k$ times. By continuity, as $\widetilde{\omega}(u)\subset
\omega (u)$ and as trajectories in $\widetilde{\omega}(u)$ by Theorem 
\ref{t:project} can not intersect non-transversally, for any $v\in \widetilde{\omega}%
(u)$, $v$ and $S_{n,m}v$ intersect at most $k$ times. By Theorem \ref%
{t:description}, $v$ is either depinned synchronized (so the proof is done),
or an equilibrium. If $v$ is an equilibrium, it is in the support of a 
$S$-invariant measure $\mu $ supported on the closure (in $\mathcal{X}_{\rho}$) of $S_{n}v$, $n\in \mathbb{Z}$, thus supported on equilibria. As the number of
self-intersections of two configurations in the support is finite and bounded by $k$, one can
(e.g. by the Birkhoff Ergodic Theorem) easily show that $\mathcal{I(\mu )=}0$. 
Thus all $v$ in the support of $\mathcal{\mu}$ must be synchronized.
\end{proof}


\begin{thebibliography}{99}

\bibitem{Angenent88a} S. Angenent and B. Fiedler, The dynamics of rotating
waves in scalar reaction diffusion equations, Trans. Am. Math. Soc. 307
(1988), 545-568.

\bibitem{Aubry88} S. Aubry and P. Y. Le Daeron, The discrete
Frenkel-Kontovora model and its extensions, Physica D \textbf{8} (1983),
381.422.

\bibitem{Baesens98} C.\ Baesens, R.\ S. Mackay, Gradient dynamics of tilted
Frenkel-Kontorova models, Nolinearity \textbf{11} (1998), 949-964.

\bibitem{Baesens05} C. Baesens, Spatially extended systems with monotone
dynamics (continuous time). \textit{Dynamics of coupled map lattices and of
related spatially extended systems}, 241-263, Lecture Notes in Phys. 671,
Springer (2005).

\bibitem{Bangert88} V. Bangert, Mather sets for twist geodesics on tori,
Dynamics Reported \textbf{1} (1988), 1-55.

\bibitem{Boyland87} P. Boyland and G. Hall, Invariant circles and the order
structure of periodic orbits in monotone twist maps, Topology \textbf{113}
(1987), 21-35.

\bibitem{Braun:04} O. M. Braun and Y. S. Kivshar\textit{, The
Frenkel-Kontorova Model}. Concepts, methods, applications. Springer-Verlag,
Berlin, 2004.

\bibitem{Bunimovich88} L. Bunimovich and Y. Sinai, Spacetime chaos in
coupled map lattices, Nonlinearity 1 (1988), 491-516.

\bibitem{Eckmann98} J.-P. Eckmann and J. Rougemont, Coarsening by
Ginzburg-Landau dynamics, Comm. Math.\ Phys. \textbf{199} (1998), 441-470.

\bibitem{Fiedler89} B. Fiedler, J. Mallet-Paret, A Poincar\'{e}-Bendixson
theorem for scalar reaction diffusion equations, Arch. Rational Mech.\ Anal. 
\textbf{107}\ (1989), 325-345.

\bibitem{Floria96} L. M. Floria and J. J. Mazo, Dissipative dynamics of the
Frenkel-Kontorova model, Advances in Physics, \textbf{45 }(1996), 505-598.

\bibitem{Floria05} L. M. Floria, C. Baesens and J.\ G\'{o}mez-Gardenez, The
Frenkel-Kontorova model, in \textit{Dynamics of Coupled Map Lattices and of
Related Spatially Extended Systems, }Lecture Notes in\ Phys. 671, Springer,
Berlin, 2005, 209-240.

\bibitem{Gallay:01} Th.\ Gallay and S. Slijep\v{c}evi\'{c}, Energy flow in
formally gradient partial differential equations on unbounded domains. J.
Dynam. Differential Equations \textbf{13} (2001), 757-789.

\bibitem{Gallay:12} Th.\ Gallay and S. Slijep\v{c}evi\'{c}, Distribution of
Energy and Convergence to Equilibria in Extended Dissipative Systems, to
appear in J. Dynam.\ Differential Equations.

\bibitem{Gallay14} Th.\ Gallay and S. Slijep\v{c}evi\'{c}, Uniform
boundedness and long-time asymptotics for the two-dimensional Navier-Stokes
equations in an infinite cylinder, to appear in J. Math. Fluid Mech.

\bibitem{Greene79} J. M. Greene, A method for determining a stochastic
transition, J. Math. Phys. \textbf{20} (1979), 1183.

\bibitem{Hu05} B. Hu, W. Qin and Z. Zheng, Rotation number of the overdamped
Frenkel-Kontorova model with ac-driving, Physica D, 208\ (2005), 172-190.

\bibitem{Joly10} R.\ Joly, G. Raugel, Generic Morse-Smale property for the
parabolic equation on the circle, Ann. Inst. H. Poincar\'{e}\ \textbf{27 }%
(2010)\textbf{, }1397-1440\textbf{.}

\bibitem{Katok95} A. Katok and B. Hasselblatt, \textit{Introduction to the
Modern Theory of Dynamical Systems}.\ Cambridge University Press, 1995.

\bibitem{MacKay85} R. S.\ MacKay and I. Percival, Converse KAM:\ theory and
practice, Commun. Math. Phys. 98 (1985), 469-512.

\bibitem{Mather82} J. Mather, Existence of quasi-periodic orbits for twist
homeomorphisms of the annulus, Topology \textbf{21} (1982), 457-467.

\bibitem{Mather89} J. Mather, Minimal measures, Comm. Math.\ Helvetici 
\textbf{64} (1989), 375-394.

\bibitem{Mather93} J. Mather, Variational construction of connecting orbits,
Ann. Inst. Fourier, Grenoble \textbf{43} (1993), 1349-1386.

\bibitem{Mazo95} J. J. Mazo, F. Falo and L. M. Floria, Stability of
metastable structures in dissipative ac dynamics of Frenkel-Kontorova
models, Phys. Rev. B, \textbf{52} (1995), 6451-6457.

\bibitem{Middleton92} A. A. Middleton, Asymptotic uniqueness of the sliding
state for charge density waves, Phys. Rev. Lett. \textbf{68} (1992), 670-673.

\bibitem{Mielke09} A. Mielke and S. Zelik, Multi-pulse evolution and space-time chaos in dissipative systems, Mem. Amer. Math. Soc. \textbf{198} (2009), 97pp.

\bibitem{Miranville08} A. Miranville, S. Zelik, Attractors for dissipative
partial differential equations in bounded and unbounded domains, Handbook of
differential equations:\ evolutionary equations.\ Vol. IV, 103-200,
Elsevier/North-Holland, Amsterdam, 2008.

\bibitem{Nincevic14} M. Nin\v{c}evi\'{c}, B. Rabar and S. Slijep\v{c}evi\'{c},
Converse KAM theory and dynamical phase transitions of Frenkel-Kontorova models,
in preparation.

\bibitem{Qin10} Wen-Xin Qin, Dynamics of the Frenkel-Kontorova model with
irrational rotation number, Nolinearity \textbf{23} (2010), 1873-1886.

\bibitem{Qin11} Wen-Xin Qin, Existence and modulation of uniform sliding
states in driven and overdamped particle chains, Comm. Math. Physics \textbf{311} (2011), 513-538.

\bibitem{Qin13} Wen-Xin\ Qin,\ Existence of dynamical hull functions with
two variables for the ac-driven Frenkel-Kontorova model, J. Diff. Equations 
\textbf{255} (2013), 3472-3490.

\bibitem{Slijepcevic99} S. Slijep\v{c}evi\'{c}, Monotone gradient dynamics
and Mather's shadowing, Nonlinearity \textbf{12} (1999), 969-986.

\bibitem{Slijepcevic13} S. Slijep\v{c}evi\'{c}, The energy flow of discrete extended gradient systems, Nonlinearity \textbf{26} (2013), 2051-2079.

\bibitem{Slijepcevic14} S. Slijep\v{c}evi\'{c}, The\ Aubry-Mather theorem
for driven generalized elastic chains, Disc. Cont. Dyn. Systems A \textbf{34}
(2014), 2983-3011.

\bibitem{Slijepcevic14a} S. Slijep\v{c}evi\'{c}, Entropy of scalar
reaction-diffusion equations, to appear in Math. Bohemica.

\end{thebibliography}
\end{document}